\documentclass[11pt]{amsart}

\usepackage{amssymb,amsmath,accents}
\usepackage{bbm}
\usepackage{a4wide}

\usepackage[colorlinks=true, pdfstartview=FitV, linkcolor=blue, 
citecolor=blue]{hyperref}

\DeclareMathAlphabet{\mymathbb}{U}{bbold}{m}{n}

\newtheorem{theorem}{Theorem}[section]

\newtheorem{lemma}[theorem]{Lemma}
\newtheorem{fact}[theorem]{Fact}
\newtheorem{coro}[theorem]{Corollary}
\theoremstyle{definition}

\newtheorem{example}[theorem]{Example}
\newtheorem{remark}[theorem]{Remark}

\newcommand{\ts}{\hspace{0.5pt}}
\newcommand{\nts}{\hspace{-0.5pt}}

\newcommand{\RR}{\mathbb{R}\ts}
\newcommand{\CC}{\mathbb{C}\ts}
\newcommand{\NN}{\mathbb{N}}
\newcommand{\ZZ}{\mathbb{Z}}

\newcommand{\cM}{\mathcal{M}}

\newcommand{\ee}{\ts\mathrm{e}}
\newcommand{\ii}{\ts\mathrm{i}}

\newcommand{\one}{\mymathbb{1}}
\newcommand{\nix}{\mymathbb{0}}

\newcommand{\vt}{\vartheta}

\newcommand{\trans}{{\raisebox{1pt}{$\scriptscriptstyle \mathsf{T}$}}}

\newcommand{\exend}{\hfill$\Diamond$}

\newcommand{\diag}{\mathrm{diag}}
\newcommand{\comm}{\mathrm{comm}}
\newcommand{\lcm}{\mathrm{lcm}}
\newcommand{\tr}{\mathrm{tr}}
\newcommand{\Mat}{\mathrm{Mat}}
\newcommand{\bs}[1]{\boldsymbol{#1}}

\newcommand{\defeq}{\mathrel{\mathop:}=}

\newcommand{\myfrac}[2]{\frac{\raisebox{-2pt}{$#1$}}
  {\raisebox{0.5pt}{$#2$}}}

\begin{document}

\title{Embedding of sub-stochastic matrices}

\author{Michael Baake}
\address{Fakult\"at f\"ur Mathematik, Universit\"at Bielefeld, 
         Postfach 100131, 33501 Bielefeld, Germany}
\email{mbaake@math.uni-bielefeld.de}
         
\author{Kiah Swinsburg}         
\address{School of Natural Sciences, Discipline of Mathematics,
          University of Tasmania,
     \newline \indent PO Box 807, Sandy Bay, TAS 7006, Australia}
 \email{Kiah.Swinsburg@utas.edu.au}

\begin{abstract} 
  The classic embedding problem for finite-dimensional Markov matrices
  has a natural counterpart for sub-stochastic matrices, which is
  analysed and discussed here. One necessary and sufficient
  characterisation of embeddability can be given via the unique
  extension of a sub-stochastic matrix to a stochastic one with one
  added state in conjunction with the embedding of this extension.
  This is then explicitly treated for $d\leqslant 3$.
\end{abstract}

\keywords{sub-stochastic matrices and generators, embedding problem}
\subjclass{60J10, 60J27, 15A30}

\maketitle

\vspace*{5mm}

\section{Introduction}

A real matrix with non-negative entries is \emph{stochastic} or
\emph{Markov} when all row sums are $1$, and \emph{sub-stochastic} (or
\emph{sub-Markov}) when they are $\leqslant 1$.  Likewise, a real
matrix with non-negative off-diagonal entries is called a
\emph{stochastic} or \emph{Markov generator}, which is also known as a
\emph{rate matrix}, when all row sums are $0$. If they are
$\leqslant 0$, one speaks of a \emph{sub-stochastic} (or
\emph{sub-Markov}) \emph{generator}.  We shall use the terms
synonymously.

For the compatibility of a given Markov matrix $M$ with a
continuous-time Markov chain, one needs to know whether $M$ is
\emph{embeddable}, which means that one has $M=\ee^Q$ for some Markov
generator $Q$; see \cite{Elfving, King} for origins and \cite{Davies,
  BS3} with references therein for background. More precisely, this is
the embedding problem for the time-homogeneous case. It can be
extended to situations with time-dependent generators, but we will not
consider this here.

The corresponding embedding question also emerges for sub-stochastic
matrices. In the time-homogeneous case, they are only compatible with
continuous time when they possess a real logarithm that is a
sub-stochastic generator. Here, we analyse this problem, which is
closely related to the classic embedding problem because every
genuinely sub-stochastic matrix (generator) can be turned into a
Markov matrix (generator) by adding one state; see
\cite[Ch.~XV{\!}.8]{Feller}.  This added state is called the
\textsf{c}-state (for cemetery or coffin) because it is absorbing; see
\cite[Ch.~III]{KS} for background.  \smallskip

This short paper is organised as follows. In Section~\ref{sec:prelim},
we recall and adapt some results from linear algebra and matrix
analysis that we need for the embedding problem, some of which we also
prove for the convenience of the reader. Then,
Theorem~\ref{thm:extend} relates the sub-stochastic embedding to its
stochastic counterpart with the added $\mathsf{c}$-state, which is
followed by examples and the explicit treatment of the embedding for
$d\leqslant 3$.

\section{Preliminaries}\label{sec:prelim}

Let us begin by recalling the following connection.

\begin{fact}\label{fact:Q-to-M}
  If\/ $Q$ is a sub-stochastic generator, $M=\ee^Q$ is a
  sub-stochastic matrix. When\/ $Q$ is Markov, then so is\/ $M$.
\end{fact}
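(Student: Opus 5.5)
To prove Fact~\ref{fact:Q-to-M}, I would split the matrix exponential into a non-negativity part and a row-sum part, handled separately.

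For non-negativity, let $Q$ be a sub-stochastic generator of dimension $d$. Choose $c \geqslant 0$ with $c \geqslant \max_i |Q_{ii}|$, so that $A \defeq Q + c\ts\one$ has only non-negative entries: the off-diagonal entries of $A$ agree with those of $Q$, and the diagonal entries are $Q_{ii}+c\geqslant 0$. Since $c\ts\one$ commutes with $Q$, we get $\ee^{Q} = \ee^{-c}\ts\ee^{A}$. The series $\ee^{A}=\sum_{n\geqslant 0} A^n/n!$ has entrywise non-negative terms, so $\ee^A$ is non-negative, and hence so is $M=\ee^Q$.

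For the row sums, let $u=(1,\dots,1)^\trans$. The generator property says $Qu = -r$ with $r\geqslant 0$ entrywise. Consider $f(t) \defeq \ee^{tQ}u$ for $t\geqslant 0$. Then $f'(t) = \ee^{tQ}Q u = -\ee^{tQ} r$. The first step, applied to $tQ$ (itself a sub-stochastic generator), shows that $\ee^{tQ}$ is non-negative. Therefore $f'(t)\leqslant 0$ entrywise, which gives $f(1)\leqslant f(0)=u$. In other words, $Mu\leqslant u$, so all row sums of $M$ are at most $1$, and $M$ is sub-stochastic.

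In the Markov case we have $r=0$, so $f'\equiv 0$ and $Mu=u$. Alternatively, $Qu=0$ gives $Q^n u=0$ for all $n\geqslant 1$, and hence $\ee^Q u = u$ directly from the series. Combined with non-negativity, this shows $M$ is Markov.

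The only slightly delicate point is the non-negativity of $\ee^{tQ}$ for all $t$, which the row-sum argument needs. This is handled by applying the shift trick uniformly in $t$, since $tQ$ is again a sub-stochastic generator for every $t\geqslant 0$.
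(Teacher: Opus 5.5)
Your proof is correct, but it takes a different route from the paper. The paper handles both claims with one device. By Euler's formula, $M=\lim_{n\to\infty}\bigl(\one+\frac{1}{n}Q\bigr)^n$, and for large $n$ the matrix $\one+\frac{1}{n}Q$ is itself sub-stochastic. Hence all its powers are sub-stochastic, and both non-negativity and $M\bs{1}\leqslant\bs{1}$ pass to the limit, since $\cM_{d,\leqslant}$ is closed. You instead separate the two properties. You get non-negativity from the scalar shift $Q+c\ts\one$ together with the exponential series. You get the row-sum bound from the monotonicity of $t\mapsto\ee^{tQ}\bs{1}$, which needs non-negativity of $\ee^{tQ}$ on all of $[0,1]$; you correctly supply this by applying the shift to $tQ$. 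The paper's argument is shorter and calculus-free, using only that sub-stochastic matrices are closed under products and limits, but it rests on the product-limit formula. Yours uses only the series and the derivative of the semigroup, and it yields an explicit formula for the row-sum deficit. Integrating $f'$ gives $(\one-M)\bs{1}=\int_0^1\ee^{tQ}\ts r\,\mathrm{d}t$ with $r=-Q\bs{1}$. This is precisely the last column $\sum_{n\geqslant 1}\frac{1}{n!}Q^{n-1}v$ of $\exp(Q_{\mathsf{c}})$ discussed after Theorem~\ref{thm:extend}. Your route therefore makes the non-negativity of that column manifest, rather than deducing it from $M\bs{1}\leqslant\bs{1}$.
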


\begin{proof}
  Let $Q$ be sub-stochastic and recall that
  $M = \lim_{n\to\infty} \bigl( \one + \frac{1}{n} Q \bigr)^n$ by
  Euler's theorem. Here, $\one + \frac{1}{n} Q$ has non-negative
  entries for all sufficiently large $n$, hence also its $n\ts$th
  power.  As non-negativity is preserved in the limit, $M$ has
  non-negative entries only.
  
  Now, $Q$ being sub-stochastic entails $Q\bs{1} \leqslant \bs{0}$, to
  be understood element-wise for the column vectors, which also
  implies
  $\bigl( \one + \frac{1}{n} Q \bigr) \bs{1} \leqslant \bs{1}$. When
  the matrix in brackets is non-negative, the inequality can be
  iterated, and we get $M \bs{1} \leqslant \bs{1}$ in the limit as
  $n\to\infty$.
  
  When $Q$ is Markov, we have $Q \bs{1} = \bs{0}$, which implies
  $M\bs{1} = \bs{1}$, and $M$ is Markov.
\end{proof}

Let $\cM_d$ denote the set of $d$-dimensional Markov matrices and
$\cM_{d,\leqslant}$ the super-set of sub-stochastic matrices. Both
sets are convex and (topologically) closed.  Now, there is a unique
way to extend any $M \in \cM_{d,\leqslant}$ to a Markov matrix
$M_{\mathsf{c}} \in \cM_{d+1}$ by adding one state, compare
\cite[Thm.~II.3.3]{Chung}, and analogously for generators $Q$, which
leads to the block matrices
\begin{equation}\label{eq:extend}
     M_{\mathsf{c}} \, = \, \left( \begin{array}{c|c}
     M & u \\ \hline 0 \cdots 0 & 1 \end{array} \right)
         \quad \text{and} \quad
     Q_{\mathsf{c}} \, = \, \left( \begin{array}{c|c}
     Q & v  \\ \hline 0 \cdots 0 & 0 \end{array} \right) .
\end{equation}
Here, $u$ (respectively $v$) stands for a $d$-dimensional column
vector whose entries are uniquely determined by the demand that all
row sums of $M_{\mathsf{c}}$ be $1$ (respectively those of
$Q_{\mathsf{c}}$ be $0$), so
\begin{equation}\label{eq:vectors}
    u \, = \, (\one - M) \bs{1} \quad \text{and} \quad
    v \, = \, - Q \ts \bs{1} \ts .
\end{equation}
Let us refer to the particular block structure of the extension in
\eqref{eq:extend} as a \emph{$\mathsf{c}$-block structure}. The
relation of a matrix to its $\mathsf{c}$-block extension has a number
of interesting consequences, as we shall now explain.

To do so, we need to refer to the (complex) \emph{Jordan normal form}
(JNF) of a matrix and to its elementary Jordan blocks, denoted as
$J_m (\lambda)$. The latter is called \emph{non-trivial} when its
dimension satisfies $m \geqslant 2$. The JNF will become central for
analysing the possible real matrix logarithms, as we shall see after
Fact~\ref{fact:Culver} below. Prior to this, let us discuss some
spectral relations, where eigenvalues in the spectrum occur with their
algebraic multiplicities, meaning that we treat spectra as multi-sets.

\begin{fact}\label{fact:spectrum}
  Let\/ $M \in \cM_{d, \leqslant}$ be a sub-stochastic matrix, and\/
  $M_{\mathsf{c}} \in \cM_{d+1}$ its unique extension with the\/
  $\mathsf{c}$-block structure from $\eqref{eq:extend}$.  Then, their
  spectra, viewed as multi-sets, are related by\/
  $\sigma (M_{\mathsf{c}}) = \sigma (M) \cup \{ 1 \}$.  Likewise, for
  a sub-stochastic generator\/ $Q$ and its extension\/
  $Q_{\mathsf{c}}$, one has\/
  $\sigma (Q_{\mathsf{c}} ) = \sigma (Q) \cup \{ 0 \}$, again as
  multi-sets.
\end{fact}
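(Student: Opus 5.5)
The plan is to compute the characteristic polynomial of the extension directly from the $\mathsf{c}$-block structure in \eqref{eq:extend}. Since $M_{\mathsf{c}}$ is block upper triangular, with diagonal blocks $M$ (of size $d$) and the $1\times 1$ block $(1)$, the matrix $x\one_{d+1} - M_{\mathsf{c}}$ is block upper triangular as well. Its diagonal blocks are $x\one_d - M$ and $x-1$. The determinant of a block triangular matrix is the product of the determinants of its diagonal blocks, so
\[
   \det\bigl(x\one_{d+1} - M_{\mathsf{c}}\bigr) \, = \,
   \det\bigl(x\one_d - M\bigr)\,(x-1) \ts .
\]
Equivalently, one can expand the determinant along the last row, which is $(0,\ldots,0,x-1)$; this gives the same factorisation without quoting the block-determinant formula.

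Reading off the roots with their multiplicities from this factorisation yields $\sigma(M_{\mathsf{c}}) = \sigma(M)\cup\{1\}$ as multi-sets. The point is that the algebraic multiplicity of each $\lambda$ is its order as a root of the characteristic polynomial. Under the factorisation, this order is additive, so the eigenvalue $1$ gains exactly one in multiplicity, whether or not $1\in\sigma(M)$ already.

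The generator case is identical. Here $Q_{\mathsf{c}}$ has diagonal blocks $Q$ and $(0)$, so $\det(x\one_{d+1}-Q_{\mathsf{c}}) = \det(x\one_d - Q)\, x$, which gives $\sigma(Q_{\mathsf{c}}) = \sigma(Q)\cup\{0\}$.

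I do not expect a real obstacle. The only point needing care is the multiset interpretation, and it is handled by phrasing everything via characteristic polynomials. Note also that the vectors $u$ and $v$ from \eqref{eq:vectors} play no role at all; only the vanishing of the last row off the diagonal is used.
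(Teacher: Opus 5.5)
Your proposal is correct and is essentially the paper's proof. The paper also factors the characteristic polynomial by a Laplace expansion along the last row, obtaining $\det(M_{\mathsf{c}} - \lambda\ts\one_{d+1}) = (1-\lambda)\det(M - \lambda\ts\one_{d})$ and, for $Q_{\mathsf{c}}$, the analogous identity with prefactor $-\lambda$, and then reads off the multiset spectra from these factorisations.
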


\begin{proof}
  Simply observe that the $\mathsf{c}$-block structure
  $M_{\mathsf{c}}$, via a Laplace expansion along its last row,
  implies the identity
\begin{equation}\label{eq:char-poly}
     \det ( M_{\mathsf{c}} - \lambda \ts \one_{d+1} ) \, = \,
     (1-\lambda) \ts \det (M - \lambda \ts \one_{d} ) \ts ,
\end{equation}
and analogously for $Q_{\mathsf{c}}$ versus $Q$, then with a
pre-factor $-\lambda$ on the right-hand side. These relations between
the characteristic polynomials prove the spectral claims.
\end{proof}

Let us be more specific on the characteristic and minimal
polynomials as follows.

\begin{lemma}\label{lem:min-poly}
  Consider\/ $M\in\cM_{d,\leqslant}$ and let\/ $p$ and\/ $q$ be the
  characteristic and the minimal polynomial of\/ $M$,
  respectively. If\/ $p^{}_{\mathsf{c}}$ and\/ $q^{}_{\mathsf{c}}$ are
  the corresponding polynomials of the unique extension of\/ $M$ to\/
  $M_{\mathsf{c}}\in\cM_{d+1}$, one has\/
  $p^{}_{\mathsf{c}} (\lambda) = (1-\lambda) \ts p(\lambda)$ together
  with
\[
  q^{}_{\mathsf{c}} (\lambda) \, = \,
       \lcm ( q(\lambda), 1-\lambda)
       \, = \, \begin{cases}
       q(\lambda) \ts , &  \text{if\/ $1 \in \sigma (M)$}  , \\
       (1-\lambda) \ts q(\lambda) \ts , & \text{otherwise} \ts .
       \end{cases}
\]
Likewise, the corresponding polynomials for\/ $Q$ and\/
$Q_{\mathsf{c}}$ from \eqref{eq:extend} satisfy\/
$p^{}_{\mathsf{c}} (\lambda) = -\lambda \, p(\lambda)$ and\/
$q^{}_{\mathsf{c}} (\lambda) = \lcm ( q(\lambda), -\lambda )$.
\end{lemma}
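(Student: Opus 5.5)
The plan is to compute how a polynomial acts on a matrix with $\mathsf{c}$-block structure, and then to show that $q^{}_{\mathsf{c}}$ and $r\defeq\lcm(q(\lambda),1-\lambda)$ divide each other. Minimal polynomials are only fixed up to a non-zero scalar (the statement writes $1-\lambda$ rather than $\lambda-1$), so all identities are read modulo normalisation.

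The claim on characteristic polynomials is exactly \eqref{eq:char-poly} from the proof of Fact~\ref{fact:spectrum}, together with its analogue for $Q_{\mathsf{c}}$. So only the minimal polynomials need work.

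\emph{Step 1: polynomials of block matrices.} By induction on $n$,
\[
M_{\mathsf{c}}^n = \left( \begin{array}{c|c} M^n & \sum_{k=0}^{n-1} M^k u \\ \hline 0 \cdots 0 & 1 \end{array} \right).
\]
By linearity, any polynomial $f$ then satisfies
\[
f(M_{\mathsf{c}}) = \left( \begin{array}{c|c} f(M) & g(M)\ts u \\ \hline 0 \cdots 0 & f(1) \end{array} \right),
\]
where $g(x) = \bigl(f(x)-f(1)\bigr)/(x-1)$ is a polynomial.

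\emph{Step 2: $q^{}_{\mathsf{c}}$ is a multiple of $r$.} Apply Step~1 with $f = q^{}_{\mathsf{c}}$. Since $q^{}_{\mathsf{c}}(M_{\mathsf{c}})=0$, the diagonal blocks give $q^{}_{\mathsf{c}}(M)=0$ and $q^{}_{\mathsf{c}}(1)=0$. Hence $q \mid q^{}_{\mathsf{c}}$ and $(1-\lambda)\mid q^{}_{\mathsf{c}}$, so $r \mid q^{}_{\mathsf{c}}$.

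\emph{Step 3: $r$ annihilates $M_{\mathsf{c}}$.} Take $f=r$. Then $r(M)=0$ because $q \mid r$, and $r(1)=0$. It remains to kill the off-diagonal block. Here $g(x)=r(x)/(x-1)$, and \eqref{eq:vectors} gives $u = -(M-\one)\bs{1}$. Therefore
\[
g(M)\ts u = -\ts g(M)(M-\one)\bs{1} = -\ts r(M)\bs{1} = \bs{0}.
\]
So $r(M_{\mathsf{c}})=0$, which gives $q^{}_{\mathsf{c}}\mid r$. Combined with Step~2, $q^{}_{\mathsf{c}} = r$ up to normalisation.

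This step is the only real obstacle. The point is that $u$ lies in the range of $M-\one$, which removes the extra factor $(\lambda-1)$ one might otherwise need.

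\emph{Step 4: the case split.} If $1\in\sigma(M)$, then $(\lambda-1)\mid q$, so $r=q$. Otherwise $q$ and $1-\lambda$ are coprime, and $r=(1-\lambda)\ts q$.

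\emph{The generator case.} The argument is verbatim the same with $1$ replaced by $0$. One has
\[
f(Q_{\mathsf{c}}) = \left( \begin{array}{c|c} f(Q) & g(Q)\ts v \\ \hline 0 \cdots 0 & f(0) \end{array} \right)
\]
with $g(x)=\bigl(f(x)-f(0)\bigr)/x$. For $f=\lcm(q,-\lambda)$, using $v=-Q\bs{1}$ from \eqref{eq:vectors}, we get $g(Q)\ts v = -\ts f(Q)\bs{1}=\bs{0}$. The divisibility argument of Steps~2 and~3 then gives $q^{}_{\mathsf{c}}(\lambda)=\lcm(q(\lambda),-\lambda)$.
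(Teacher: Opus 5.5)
Your proof is correct and takes essentially the same route as the paper. Both arguments read off $q \mid q^{}_{\mathsf{c}}$ and $(\lambda-1)\mid q^{}_{\mathsf{c}}$ from the diagonal blocks, and both then show that the off-diagonal block of $\lcm(q,1-\lambda)$ evaluated at $M_{\mathsf{c}}$ vanishes because $u=(\one-M)\bs{1}$. Your difference-quotient term $g(M)\ts u$ is exactly the paper's $\widetilde{u}=\sum_m \alpha_m(\one+M+\dots+M^{m-1})\ts u$, written so that the cancellation $g(M)\ts u=-r(M)\bs{1}=\bs{0}$ is immediate; in the generator case you also correctly put $f(0)$ in the corner, where the paper has the misprint $q^{}_{\mathsf{c}}(1)$.
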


\begin{proof}
  The claim on the characteristic polynomials is clear from
  Eq.~\eqref{eq:char-poly} for $M_{\mathsf{c}}$, and from the comment
  in the proof of Fact~\ref{fact:spectrum} for the generator case.
  
  To determine the minimal polynomial of $M_{\mathsf{c}}$, its block
  structure implies that $q^{}_{\mathsf{c}}$ must annihilate $M$ and
  possess $1$ as a root, so both $1-\lambda$ and $q(\lambda)$ must be
  factors of $q^{}_{\mathsf{c}} (\lambda)$, as is the case. The least
  common multiple has minimal degree among all polynomials with these
  factors, so it remains to check that it really annihilates
  $M_{\mathsf{c}}$.
  
  To this end, let $q^{}_{\mathsf{c}} (\lambda) = \sum_{m=0}^{N}
  \alpha_m \lambda^m$ and observe that we get
\[
    q^{}_{\mathsf{c}} (M_{\mathsf{c}}) \, = \, \begin{pmatrix}
       q^{}_{\mathsf{c}} (M) & \widetilde{u} \\
       0 \cdots 0 & q^{}_{\mathsf{c}} (1) \end{pmatrix} \, = \,
     \begin{pmatrix} \nix & \widetilde{u} \\ 0 \cdots 0 & 0
     \end{pmatrix}
\]  
with $\widetilde{u} = \sum_{m=1}^{N} \alpha_m
(\one + M + \ldots + M^{m-1}) u$, where $u = (\one - M) \bs{1}$
as in \eqref{eq:vectors}. This gives
\[
    \widetilde{u} \, = \sum_{m=1}^{N} \alpha_m (\one - M^m) \bs{1}
    \, = \, \bigl( q^{}_{\mathsf{c}} (1) - \alpha^{}_{0} \bigr) \bs{1}
    + \bigl( \alpha^{}_{0} \one - q^{}_{\mathsf{c}} (M) \bigr) \bs{1}
    \, = \, \bs {0}
\]
due to the properties of $q^{}_{\mathsf{c}}$ already established. We
thus have $q^{}_{\mathsf{c}} (M_{\mathsf{c}}) = \nix$ as claimed.

For $Q_{\mathsf{c}}$, we have $q^{}_{\mathsf{c}} (\lambda) = \sum_{m=1}^{N}
  \alpha_m \lambda^m$, because $\alpha^{}_{0} = 0$, and we obtain
\[
    q^{}_{\mathsf{c}} (Q_{\mathsf{c}}) \, = \, \begin{pmatrix}
       q^{}_{\mathsf{c}} (Q) & \widetilde{v} \\
       0 \cdots 0 & q^{}_{\mathsf{c}} (1) \end{pmatrix} \, = \,
     \begin{pmatrix} \nix & \widetilde{v} \\ 0 \cdots 0 & 0
     \end{pmatrix}
\]
with $\widetilde{v} = \sum_{m=1}^{N} \alpha_m Q^{m-1} v = -
\sum_{m=1}^{N} \alpha_m Q^m \bs{1} = q^{}_{\mathsf{c}} (Q) \bs{1}
= \bs{0}$ by \eqref{eq:vectors}, giving the desired conclusion
for the generator case.
\end{proof}

\begin{remark}
  The formula for the minimal polynomial is the one expected for
  matrices in block-diagonal form, which we do not have. Indeed, as
  any non-trivial Jordan block shows, it cannot hold in general when
  off-diagonal blocks are non-zero. In our case, the vectors $u$ and
  $v$ have a special form, as used in the proof. More generally, one
  obtains
  $q^{}_{\mathsf{c}} (\lambda) = \lcm ( q(\lambda), r-\lambda)$ for
  all $\mathsf{c}$-block matrices $
  R_{\mathsf{c}} \, = \, \left( \begin{smallmatrix}  R & w \\
                                  0 \cdots 0 & r \end{smallmatrix}
  \right) $ with $w = (r - R) \ts \bs{1}$, which means that all row
  sums of $R_{\mathsf{c}}$ are $r$.  Indeed, one then has
\[
    R_{\mathsf{c}}^m \, = \, \begin{pmatrix}
    R^m & w_m \\ 0 \cdots 0 & r^m  \end{pmatrix}
\]
with $w^{}_{0} = \bs{0}$ and
$w_m = \sum_{\ell=0}^{m-1} r^{\ell} R^{m-1-\ell} w$ for $m\in\NN$.
With the special form of $w$, one can then verify that
$q^{}_{\mathsf{c}}$ annihilates $R_{\mathsf{c}}$; further details are
left to the interested reader.  \exend
\end{remark}

Let us formulate one particular consequence.

\begin{coro}\label{coro:JNF}
  If\/ $\lambda = 1$ is an eigenvalue of a sub-stochastic matrix\/
  $M$, the geometric multiplicity of\/ $\lambda$ equals its algebraic
  one, so no non-trivial Jordan blocks for\/ $\lambda$ can occur.
\end{coro}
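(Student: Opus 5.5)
We need to show that if $1\in\sigma(M)$ for $M\in\cM_{d,\leqslant}$, then $1$ is a simple root of the minimal polynomial $q$ of $M$. This is equivalent to the claim, since the size of the largest Jordan block for an eigenvalue equals the multiplicity of that eigenvalue as a root of the minimal polynomial. The plan is to transfer the question to the Markov extension $M_{\mathsf{c}}$ via Lemma~\ref{lem:min-poly}, and then use the standard fact that Markov matrices have only trivial Jordan blocks for eigenvalue $1$.

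First, I will establish this fact for any $P\in\cM_{d+1}$. Since $P$ is non-negative with unit row sums, its induced $\infty$-norm is $\|P\|_\infty = 1$. The same holds for every power, because $P^n$ is again Markov, so $\|P^n\|_\infty=1$ for all $n\in\NN$. Now suppose $P$ had a Jordan block $J_m(1)$ with $m\geqslant 2$. Then $J_m(1)^n$ has the entry $n$ just above the diagonal, so $\|P^n\|$ would grow linearly in $n$ in any norm, because all norms are equivalent and the similarity transformation to JNF is fixed. This contradicts boundedness. Hence $1$ is a simple root of the minimal polynomial of $P$.

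Next, I apply this to $P=M_{\mathsf{c}}$. If $1\in\sigma(M)$, Lemma~\ref{lem:min-poly} gives $q^{}_{\mathsf{c}}=q$. Since $1$ is a simple root of $q^{}_{\mathsf{c}}$ by the previous step, it is a simple root of $q$. Therefore all Jordan blocks of $M$ for $\lambda=1$ are trivial, and the geometric multiplicity equals the algebraic one.

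As an alternative that avoids the lemma altogether, one can apply the boundedness argument directly to $M$. We have $\|M^n\|_\infty\leqslant 1$, because $M^n$ is again sub-stochastic by repeated use of $M\bs{1}\leqslant\bs{1}$ with non-negativity. The same growth contradiction then rules out non-trivial blocks for any eigenvalue on the unit circle, in particular for $\lambda=1$. The only step needing care is the claim that a non-trivial block forces $\|M^n\|\to\infty$. This follows by writing $M=TJT^{-1}$ and bounding $\|J^n\|\leqslant \|T^{-1}\|\,\|M^n\|\,\|T\|$, so I do not expect a real obstacle.
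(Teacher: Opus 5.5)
Your main argument is the paper's own: it also passes to $M_{\mathsf{c}}$, uses $q^{}_{\mathsf{c}} = q$ from Lemma~\ref{lem:min-poly} when $1\in\sigma(M)$, and concludes that $(\lambda-1)$ divides $q$ only to the first power. The only difference on this route is that the paper cites the Markov case as well known, whereas you prove it by power-boundedness ($\|P^n\|_\infty = 1$ versus the entry $n$ in $J_m(1)^n$), and that proof is correct.

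Your alternative is a genuinely different and more economical route. Since $M^n \geqslant 0$ and $M^n\bs{1}\leqslant\bs{1}$, the powers of $M$ are bounded, so the growth argument applies to $M$ directly. This needs neither the $\mathsf{c}$-extension nor Lemma~\ref{lem:min-poly}. It also gives a stronger conclusion: every eigenvalue of modulus $1$ of a sub-stochastic matrix is semisimple, because the superdiagonal entry of $J_m(\lambda)^n$ is $n\lambda^{n-1}$.

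What the paper's route buys is mainly coherence with its theme. It shows how the spectral structure of $M$ is inherited from its Markov extension, the same device behind Theorem~\ref{thm:extend}. Logically, though, it just relies on the cited Markov fact, which your boundedness argument proves.
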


\begin{proof}
  When $M$ is a Markov matrix, $1$ is always an eigenvalue, and the
  claim is a well-known result; see \cite[Fact~2.2]{BS3} and
  references given there.

  So, let $M \in \cM_{d,\leqslant}$ be sub-stochastic, and let
  $1\in\sigma (M)$, with algebraic degree $k \geqslant 1$. Then,
  $1 \in\sigma (M_{\mathsf{c}})$, with algebraic degree $k+1$ as a
  consequence of Lemma~\ref{lem:min-poly}. As the corresponding factor
  of the minimal polynomial for both matrices is $(\lambda - 1)$, the
  geometric degree of $1$ for $M$ is also $k$, which proves the claim.
\end{proof}

Corollary~\ref{coro:JNF} has the following interesting counterpart,
where we use the notion of irreducibility in the sense of non-negative
matrices; compare \cite[Sec.~15.1]{Lan}.

\begin{fact}\label{fact:radius}
  Let\/ $M$ be a sub-stochastic matrix that is also irreducible.
  Then, $M$ is Markov if and only if its spectral radius is\/ $1$.
\end{fact}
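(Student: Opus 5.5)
One direction is immediate and the other is Perron--Frobenius.

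If $M$ is Markov, then $M\bs{1}=\bs{1}$, so $1\in\sigma(M)$. Any eigenvalue $\lambda$ with eigenvector $x$ satisfies $|\lambda|\,\|x\|_\infty \leqslant \|M\|_\infty \|x\|_\infty = \|x\|_\infty$, because the row sums of the non-negative matrix $M$ are at most $1$. Hence $\varrho(M)=1$. This bound $\varrho(M)\leqslant \|M\|_\infty\leqslant 1$ holds for every sub-stochastic $M$, irreducible or not.

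For the converse, let $M$ be irreducible with $\varrho(M)=1$. I will use the Perron--Frobenius theorem for irreducible non-negative matrices, as in \cite[Sec.~15.1]{Lan}. It gives a strictly positive left eigenvector $y^{\trans}$ with $y^{\trans}M=\varrho(M)\,y^{\trans}=y^{\trans}$. Set $u=(\one-M)\bs{1}$ as in \eqref{eq:vectors}. Sub-stochasticity gives $u\geqslant\bs{0}$ element-wise. Then
\[
   y^{\trans}u \, = \, y^{\trans}\bs{1}-y^{\trans}M\bs{1}
   \, = \, y^{\trans}\bs{1}-y^{\trans}\bs{1} \, = \, 0 .
\]
Since every entry of $y$ is strictly positive and $u\geqslant\bs{0}$, this forces $u=\bs{0}$. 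That is, $M\bs{1}=\bs{1}$, so $M$ is Markov.

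The key input is the strict positivity of the Perron left eigenvector. This is exactly where irreducibility is needed; without it the claim fails, as $M=\diag(1,\tfrac12)$ shows. There is also the degenerate case $d=1$ with $M=(0)$, which is irreducible under some conventions but has spectral radius $0$, so it does not affect the claim. Beyond that, the argument only combines \eqref{eq:vectors} with the standard Perron--Frobenius statement.
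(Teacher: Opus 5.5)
Your proof is correct. For the converse it takes a somewhat different route from the paper.

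The paper argues by contrapositive. It assumes that some row sum $\sigma_i = (M\bs{1})_i$ is strictly below $1$. It then cites the row-sum dichotomy for irreducible non-negative matrices: either $\min_i \sigma_i < \rho < \max_i \sigma_i$, or all $\sigma_i$ equal $\rho$. Both cases force $\rho<1$.

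You instead pair the strictly positive left Perron vector $y$ with $u=(\one-M)\bs{1}\geqslant\bs{0}$ to get $y^{\trans}u=0$. Your computation is essentially the standard proof of that dichotomy, specialised to $\rho=1$. In general, $\rho\, y^{\trans}\bs{1} = \sum_i y_i \sigma_i$ shows that $\rho$ is a weighted average of the row sums with strictly positive weights.

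So your version is more self-contained. It needs only the strict positivity of the left Perron eigenvector, which holds because $M^{\trans}$ is irreducible too. The paper's citation instead delivers extra quantitative information: $\rho$ lies strictly between the smallest and largest row sums unless these coincide.

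Your forward direction via $\rho(M)\leqslant\lVert M\rVert_\infty\leqslant 1$ is an explicit version of the paper's ``standard arguments for Markov matrices''. Your example $\diag(1,\frac12)$ correctly shows why irreducibility is needed.
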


\begin{proof}
  When $M$ is Markov, $\bs{1}$ is a right eigenvector with eigenvalue
  $1$, which is maximal and thus the spectral radius, by standard
  arguments for Markov matrices; see \cite[Ch.~15.8]{Lan}.

  To continue, let $\sigma_i = (M \bs{1})_i$ be the $i\ts$th row sum
  of $M$ and assume that $M$ is sub-stochastic, with at least one row
  sum being strictly smaller than $1$, so $\min_{i} \sigma_{i}<1$ and
  $\max_{i}\sigma_{i}\leqslant 1 $.  Let $\rho$ be the spectral radius
  of $M$, which is an eigenvalue of $M$ by \cite[Thm.~15.5.1]{Lan}.  A
  standard result from the theory of non-negative matrices, see
  \cite[Sec.~15.3 and Exc.~15.3.7]{Lan}, states that we have a
  dichotomy, namely either
  $\min_{i} \sigma_i < \rho < \max_{i} \sigma_i$ or
  $\min_{i} \sigma_{i} = \max_{i} \sigma_{i} = \rho$.  In either case,
  we get $\rho<1$, and our claim follows.
\end{proof}

A matrix $B\in \Mat (d,\RR)$ is called \emph{cyclic} (or
\emph{non-derogatory}, see \cite{HJ}) when its characteristic
polynomial is also its minimal one. This property is equivalent to all
eigenspaces of $B$ being one-dimensional, as well as to its
\emph{commutant}
$\comm (B) = \{ C \in \Mat (d,\RR) : [B, C] = \nix \}$ being
Abelian. In fact, one then has $\comm (B) = \RR [B]$, which is the
polynomial ring generated by $B$; see \cite[Thm.~2.1]{BS3} for a
formulation of the underlying theorem by Frobenius in this
setting. Non-commutativity of $\comm(B)$ is relevant for potential
non-uniqueness of the real logarithm of $B$. In particular, one has to
consider further matrix logarithms emerging from similarity transforms
with invertible matrices that commute with $B$ but not with
$\log (B)$.

\smallskip

To see how things develop in various dimensions, we will need further
observations from linear algebra, and one result from matrix analysis
on the existence of real matrix logarithms.  The latter was analysed
by Culver \cite{Culver}. We summarise his results as follows.

\begin{fact}[Culver]\label{fact:Culver}
  A matrix\/ $B\in\Mat (d,\RR)$ has a real logarithm if and only if
  the following two conditions are satisfied.
\begin{enumerate}\itemsep=2pt
\item The matrix\/ $B$ is non-singular.
\item Each elementary Jordan block of the JNF of\/ $B$ that belongs 
  to a negative eigenvalue occurs with even multiplicity.
\end{enumerate}   
Further, when all eigenvalues of\/ $B$ are positive real numbers and
no elementary Jordan block occurs twice, the real logarithm of\/ $B$ 
is unique.  \qed
\end{fact}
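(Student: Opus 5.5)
Since the statement is quoted from Culver \cite{Culver} without proof, here is how I would prove it. Everything runs through the complex JNF and the way the exponential acts on Jordan blocks. The key local fact is the following. If $A$ has the Jordan block $J_m(\mu)$, then $\ee^{J_m(\mu)} = \ee^{\mu}\ts \ee^{N}$ with $N$ nilpotent of rank $m-1$. Hence $\ee^{J_m(\mu)}$ is similar to $J_m(\ee^{\mu})$, because $\ee^{N}-\one = N + N^2/2 + \ldots$ still has rank $m-1$. So the JNF of $\ee^A$ is obtained from that of $A$ block by block via $J_m(\mu)\mapsto J_m(\ee^\mu)$.

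\emph{Necessity.} Let $B=\ee^A$ with $A$ real. Non-singularity follows from $\det B = \ee^{\tr A}\neq 0$. For a negative eigenvalue $\lambda$ of $B$, every Jordan block $J_m(\lambda)$ of $B$ comes from a block $J_m(\mu)$ of $A$ with $\ee^\mu=\lambda$. Any such $\mu$ has imaginary part an odd multiple of $\pi$, so it is non-real. Since $A$ is real, its Jordan blocks for $\mu$ and $\bar\mu$ occur with equal multiplicities, and $\ee^{\bar\mu}=\lambda$ as well. Hence the blocks $J_m(\lambda)$ of $B$ come in pairs, which is condition (2).

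\emph{Sufficiency.} Pass to a real Jordan form $B = S\ts \diag(B_1,\ldots,B_r)\ts S^{-1}$ with $S$ real, and construct a real logarithm of each real block. There are three kinds of block.
\begin{itemize}
\item For $J_m(\lambda)$ with $\lambda>0$, take $\log\lambda\,\one + \log\bigl(\one + N/\lambda\bigr)$, where the logarithm series terminates.
\item For a real $2m$-block belonging to a conjugate pair $J_m(\lambda)\oplus J_m(\bar\lambda)$ with $\lambda\notin\RR$, take any complex logarithm $L$ of $J_m(\lambda)$ and use $L\oplus\bar L$. This complex-conjugate pair is similar, via a fixed real-izing matrix, to a real matrix whose exponential is the given real block.
\item For a pair $J_m(\lambda)\oplus J_m(\lambda)$ with $\lambda<0$, which exists by condition (2), treat it as the complex pair $J_m(\lambda)\oplus J_m(\bar\lambda)$ with $\bar\lambda=\lambda$. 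Choose $\log|\lambda| + \ii\pi$ for one copy and $\log|\lambda| - \ii\pi$ for the other, together with conjugate nilpotent parts, and realify exactly as in the previous case.
\end{itemize}
Conjugating the resulting block-diagonal real matrix back with $S$ gives a real logarithm. The main technical care is in showing that the realification of a conjugate pair is compatible with $\exp$. This holds because $\exp$ commutes with conjugation by the fixed unitary $\tfrac{1}{\sqrt2}\left(\begin{smallmatrix}1&1\\ \ii&-\ii\end{smallmatrix}\right)\otimes\one_m$.

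\emph{Uniqueness.} Assume all eigenvalues of $B$ are positive and no elementary Jordan block is repeated, and let $A$ be any real logarithm of $B$. If $A$ had a non-real eigenvalue $\mu$, then $J_m(\mu)$ and $J_m(\bar\mu)$ would both be blocks of $A$. By the local fact they would yield two copies of $J_m(\ee^\mu)$ in $B$, which is impossible: either $\ee^\mu\notin\RR_{>0}$, contradicting positivity of the spectrum, or $\ee^\mu>0$ and the block $J_m(\ee^\mu)$ is repeated. Hence $\sigma(A)\subset\RR$.

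Since $\exp$ is injective on $\RR$, each generalised eigenspace $V_\lambda$ of $B$ is exactly the generalised eigenspace of $A$ for $\log\lambda$. Indeed, $A$ commutes with $B$, so $A$ preserves $V_\lambda$, and the eigenvalue bookkeeping above identifies the spaces. On $V_\lambda$ we have $A = \log\lambda\,\one + N'$ with $N'$ nilpotent, and $\ee^{N'} = B|_{V_\lambda}/\lambda$ is unipotent. Now $\exp$ and the terminating series $\log(\one+X)$ are mutually inverse bijections between nilpotent and unipotent operators. Therefore $N' = \log\bigl(B|_{V_\lambda}/\lambda\bigr)$, which forces $A$ to equal the logarithm constructed above, and so the real logarithm is unique.

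I expect this last identification to be the main obstacle. One has to control $A$ only through $[A,B]=\nix$ and the exclusion of non-real spectrum, without assuming that $B$ is cyclic.
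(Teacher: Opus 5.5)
Your proof is correct. The paper gives no argument of its own for this Fact; it is quoted from Culver with a bare \qed. So the natural comparison is with Culver's original proof.

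Your necessity and existence parts follow the standard route, which is also Culver's. The exponential acts on the JNF blockwise via $J_m(\mu)\mapsto J_m(\ee^{\mu})$. The conjugation symmetry of a real $A$ forces the blocks $J_m(\lambda)$ with $\lambda<0$ to come in pairs. The real Jordan form, together with the realification of $L\oplus\overline{L}$, then produces a real logarithm.

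The genuine difference is in the uniqueness part. Culver's argument rests on the general description of all solutions of $\ee^X=B$, going back to Gantmacher. One picks a branch of the logarithm on each Jordan block, then conjugates by an invertible matrix from $\comm(B)$, and checks when all real solutions collapse to one. This is the commutant mechanism alluded to just before the statement. It yields the full equivalence, including the origin and types of non-uniqueness that the paper defers to Culver.

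You use the no-repeated-block hypothesis only once, to exclude non-real spectrum of a real logarithm $A$. You then show directly that $A$ is forced:
\begin{itemize}
\item it preserves each generalised eigenspace $V_\lambda$ of $B$, because $[A,B]=\nix$;
\item its only eigenvalue there is the real number $\log(\lambda)$;
\item the nilpotent/unipotent bijection then determines $A|_{V_\lambda}$.
\end{itemize}
This is shorter and self-contained. It also yields a useful intermediate fact: for $B$ with positive spectrum, the principal logarithm is the only logarithm, real or complex, with real spectrum. However, it proves only the sufficient direction of uniqueness, which is exactly what the paper states.

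Your closing worry is already settled by your own argument. No cyclicity of $B$ is needed. You do not even have to identify the generalised eigenspaces of $A$; you only need that $A|_{V_\lambda}-\log(\lambda)\ts\one$ is nilpotent.
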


For the origin and possible types of non-uniqueness, we refer to the
original paper \cite{Culver}.  We are now ready to treat the embedding
question.

\section{Embedding results}\label{sec:embed}

The embedding for $d=1$ is trivial, as any positive number is the
exponential of its (existing) logarithm. We thus restrict to
$d\geqslant 2$, and begin with the following general connection.

\begin{theorem}\label{thm:extend}
  A sub-stochastic matrix\/ $M\in\cM_{d,\leqslant}$ can only be
  embeddable when it satisfies\/ $\det (M) > 0$. Then, one has\/
  $M = \ee^Q$, with a sub-stochastic generator\/ $Q$, if and only if
  the unique extension of\/ $M$ to\/ $M_{\mathsf{c}}$ is embeddable
  with a stochastic generator in $\mathsf{c}$-block form, where the
  latter then is the unique extension\/ of\/ $Q$ to\/
  $Q_{\mathsf{c}}$.
\end{theorem}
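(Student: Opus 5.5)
The plan has three parts: the determinant condition, the forward direction by exponentiating the extended generator, and the converse by reading off the upper-left block of a $\mathsf{c}$-block generator.

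For the determinant condition, suppose $M=\ee^Q$ with $Q$ real. Then $\det(M)=\ee^{\tr(Q)}>0$, so $\det(M)>0$ is necessary.

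For the forward direction, suppose $M=\ee^Q$ with a sub-stochastic generator $Q$. I form $Q_{\mathsf{c}}$ as in \eqref{eq:extend} with $v=-Q\bs{1}\geqslant \bs{0}$. Then $Q_{\mathsf{c}}$ has non-negative off-diagonal entries and zero row sums, so it is a Markov generator in $\mathsf{c}$-block form. By induction on $m$, the powers are
\[
   Q_{\mathsf{c}}^m = \begin{pmatrix} Q^m & Q^{m-1}v \\ 0\cdots 0 & 0\end{pmatrix}
   \quad\text{for } m\geqslant 1 .
\]
Summing the exponential series gives
\[
   \ee^{Q_{\mathsf{c}}} = \begin{pmatrix} \ee^Q & w \\ 0\cdots0 & 1\end{pmatrix}
\]
for some vector $w$. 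By Fact~\ref{fact:Q-to-M}, $\ee^{Q_{\mathsf{c}}}$ is Markov, so its row sums are $1$. Hence $w=(\one-\ee^Q)\bs{1}=(\one-M)\bs{1}=u$, and the exponential is exactly $M_{\mathsf{c}}$. So $M_{\mathsf{c}}$ is embeddable via the $\mathsf{c}$-block generator $Q_{\mathsf{c}}$, and this generator is the unique extension of $Q$.

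For the converse, suppose $M_{\mathsf{c}}=\ee^{G}$ with $G$ a Markov generator of $\mathsf{c}$-block form. Write
\[
   G = \begin{pmatrix} Q & v \\ 0\cdots0 & 0\end{pmatrix},
\]
where the last row vanishes by the definition of $\mathsf{c}$-block form. The block computation above again shows that the upper-left block of $\ee^G$ is $\ee^Q$. Comparing with $M_{\mathsf{c}}$ gives $\ee^Q=M$. Next I check that $Q$ is a sub-stochastic generator:
\begin{itemize}
\item its off-diagonal entries are off-diagonal entries of $G$, hence non-negative;
\item its row sums are $-v_i\leqslant 0$, because $v$ is an off-diagonal column of $G$ and so $v\geqslant\bs{0}$, while the rows of $G$ sum to $0$.
\end{itemize}
Thus $M=\ee^Q$ with $Q$ sub-stochastic. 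Moreover $v=-Q\bs{1}$, so $G$ is precisely $Q_{\mathsf{c}}$.

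The main obstacle is the role of the hypothesis ``in $\mathsf{c}$-block form''. $M_{\mathsf{c}}$ might also be embeddable via some Markov generator whose last row is non-zero, and such a generator does not directly yield a logarithm of $M$. The theorem sidesteps this by building the block form into the hypothesis, so I will make sure the argument uses only that. Beyond this, the only technical point is the block-power formula. It is routine because the last row of the generator is zero, which makes the exponential block upper-triangular with upper-left block $\ee^Q$.
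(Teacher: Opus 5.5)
Your proof is correct and takes the same route as the paper's: the determinant condition from $\det(\ee^Q)=\ee^{\tr(Q)}$, the forward direction by exponentiating $Q_{\mathsf{c}}$ and fixing the last column via Fact~\ref{fact:Q-to-M} (equivalently, uniqueness of the $\mathsf{c}$-extension), and the converse by reading off the upper-left block, where you spell out the block-power formula that the paper only calls ``standard arguments''. The ``obstacle'' in your last paragraph does not actually arise: for any Markov generator $G$ one has $(\ee^{G})_{ij} \geqslant \ee^{-c}\ts G_{ij}$ for $i\ne j$ (with $c\geqslant \max_i \lvert G_{ii}\rvert$), so $\ee^{G}=M_{\mathsf{c}}$ forces the off-diagonal entries of the last row of $G$ to vanish, and then the zero row sum forces the whole last row to vanish, which makes the $\mathsf{c}$-block form automatic.
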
 

\begin{proof}
  The first claim is a consequence of $\det (\ee^Q) =
  \ee^{\tr (Q)} > 0$ for any real matrix $Q$.

  If $M = \ee^Q$, there is the unique extension $Q_{\mathsf{c}}$ of
  $Q$ given in \eqref{eq:extend}, which is a Markov generator in
  dimension $d\ts {+}1$.  Its exponential, $\exp (Q_{\mathsf{c}})$, is
  an element of $\cM_{d\ts +1}$ by Fact~\ref{fact:Q-to-M}, and it has
  $\mathsf{c}$-block form with $M$ as its upper-left block. Since the
  extension of $M$ to a Markov matrix $M_{\mathsf{c}}$ in
  $\mathsf{c}$-block form is unique, we get
  $M_{\mathsf{c}} = \exp (Q_{\mathsf{c}})$.
   
  Conversely, if $M_{\mathsf{c}}$ is embeddable with a generator in
  $\mathsf{c}$-block form, $Q_{\mathsf{c}}$ say, the restriction to
  the upper-left block gives $M = \ee^Q$ by standard arguments on the
  block structure, where $Q$ is the upper-left block of
  $Q_{\mathsf{c}}$ and thus a sub-stochastic generator by
  construction.
\end{proof}

It may be instructive to check the last step of the proof explicitly,
with $v$ from \eqref{eq:vectors}. Then, $\exp (Q_{\mathsf{c}})$ has
the proper $\mathsf{c}$-block structure, with $M=\ee^Q$ as stated and
the column vector
\[
  u \, = \sum_{n\geqslant 1} \myfrac{1}{n \ts !}  Q^{n-1} v
  \, = \, - \sum_{n\geqslant 1} \myfrac{1}{n\ts !} Q^n \bs{1}
  \, = \, ( \one - M ) \bs{1} \, \geqslant \, \bs{0} \ts ,
\]
where the last step follows from $M \bs{1} \leqslant \bs{1}$.  So,
every embeddable sub-stochastic matrix is a block in some embeddable
Markov matrix, for which there are further possibilities as follows.

\begin{remark}
  More generally, one can consider a Markov block matrix of the form
\[
  M \, = \, \begin{pmatrix} M_1 & B \\ 0 & M_2 \end{pmatrix} ,
\]
where $M_2$ is itself Markov, while $M_1$ is sub-stochastic.  This
relates $M_1$ to the general setting of absorbing Markov chains;
compare \cite[Ch.~III]{KS}. If $M$ is embeddable with a Markov
generator $Q$ of matching block form, it is clear that $M_2$ is
embeddable as well, and $M_1$ is embeddable via the corresponding
block of $Q$, which then is a sub-stochastic generator.

Clearly, one can also ask for extensions of doubly sub-stochastic
matrices, where some interesting additional features emerge. We do not
discuss this here, but refer to \cite[Ch.~2]{MOA} for some background
and references.  \exend
\end{remark}

\subsection{Results for \texorpdfstring{$d=2$}{\textit{d}=3}}

When $d=2$, the most general Markov matrix is
$M = \left( \begin{smallmatrix} 1-a & a \\ b & 1-b \end{smallmatrix}
\right)$ with $a,b \in [0,1]$. Here, Kendall's theorem, compare
\cite{King,BS3}, states that $M$ is embeddable if and only if
$\det (M) >0$. In this case, the embedding is unique, and given by the
principal matrix logarithm, which reads
\[
     Q \, = \, - \frac{\log(1-a-b)}{a+b} (M-\one) \ts ,
\]     
with the limiting case $Q=\nix$ for $a=b=0$.

Since $\det (\ee^R) = \ee^{\tr (R)}$, we are in the particularly
simple situation that the necessary condition $\det (M) > 0$ is also
sufficient. Let us thus see what happens for a sub-stochastic matrix,
which we parameterise as
\[
    M \, = \, \begin{pmatrix} a & c \\ d & b \end{pmatrix},
\]
where $a,b,c,d \in [0,1]$ together with $a+c \leqslant 1$ and
$b+d \leqslant 1$. As $M$ is non-negative, its spectral radius is an
eigenvalue \cite[Thm.~15.5.1]{Lan}, so $\det (M) > 0$ implies that
both eigenvalues are positive,
\[
  \lambda^{}_{\pm} \, = \, \myfrac{1}{2} \bigl(
  t \pm \sqrt{t^2 - 4 \Delta} \bigr) \, > \, 0 \ts ,
\]
where $t = \tr (M) = a+b$, $\Delta = \det (M) = ab-cd$ and
$t^2 - 4 \Delta = (a-b)^2 + 4 cd \geqslant 0$.

Let us look at a specific real matrix logarithm of a non-negative
matrix, namely the principal one. Some standard computations based on
\cite{Higham} lead to the following.

\begin{lemma}\label{lem:sub-Kendall}
  Let\/ $M \in \Mat (2,\RR)$ be a matrix with non-negative entries
  and\/ $\det (M)>0$. Then, the spectrum is positive, and the
  principal matrix logarithm of\/ $M$ is the real matrix
\[
  \log (M) \, = \,
  \begin{cases}
    \frac{\lambda^{}_{+} \log(\lambda^{}_{-}) -
    \lambda^{}_{-} \log(\lambda^{}_{+})}
    {\lambda^{}_{+} - \ts \lambda^{}_{-}} \ts \one +
    \frac{\log(\lambda^{}_{+}) - \log(\lambda^{}_{-})}
    {\lambda^{}_{+} - \ts \lambda^{}_{-}} M \ts , &
  \text{if $\lambda^{}_{+} > \lambda^{}_{-}$} > 0, \\
   (\log(\lambda) - 1) \ts \one + \frac{1}{\lambda} M \ts ,
  & \text{if $\lambda^{}_{+} = \lambda^{}_{-} = \lambda > 0$} .
  \end{cases}
\]
In particular, all off-diagonal elements of\/ $\log(M)$ are
non-negative. When the minimal polynomial of\/ $M$ has degree\/ $2$,
there is no other real logarithm of\/ $M$.
\end{lemma}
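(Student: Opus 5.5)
Positivity of the spectrum comes first. A non-negative $M$ has its spectral radius $\rho$ as an eigenvalue by \cite[Thm.~15.5.1]{Lan}. For $d=2$ the two eigenvalues are either both real or a complex-conjugate pair. The discriminant is $t^2-4\Delta=(a-b)^2+4cd\geqslant 0$ in the entrywise notation above, so both are real. Since $\lambda^{}_{+}=\rho\geqslant 0$ and $\lambda^{}_{+}\lambda^{}_{-}=\det(M)>0$, both are strictly positive.

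For the formula, I would use the Lagrange–Sylvester interpolation form of a primary matrix function, as in \cite{Higham}. When $\lambda^{}_{+}>\lambda^{}_{-}$, $M$ is diagonalisable, and $\log(M)=\alpha\one+\beta M$ where the linear polynomial $\alpha+\beta x$ matches $\log$ at the two eigenvalues. Solving
\[
\alpha+\beta\lambda^{}_{\pm}=\log(\lambda^{}_{\pm})
\]
gives exactly the stated coefficients. When $\lambda^{}_{+}=\lambda^{}_{-}=\lambda$, Hermite interpolation at the double root applies, with value $\log\lambda$ and derivative $1/\lambda$. This gives $\log(\lambda)+\frac{1}{\lambda}(x-\lambda)$, hence $(\log(\lambda)-1)\one+\frac{1}{\lambda}M$. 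This works whether $M=\lambda\one$ or $M$ is a single Jordan block. The result is real because all data are real, and it is the principal logarithm because the spectrum is positive real.

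Non-negativity of the off-diagonal entries follows because the off-diagonal part of $\alpha\one+\beta M$ is $\beta$ times that of $M$. So it suffices that $\beta\geqslant 0$. In the first case $\beta$ is a difference quotient of the increasing function $\log$; in the second $\beta=1/\lambda>0$.

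For uniqueness when the minimal polynomial has degree $2$, $M$ is cyclic. Any real logarithm $L$ satisfies $\ee^L=M$, so $L$ commutes with $M$, and cyclicity gives $L\in\RR[M]$, say $L=\gamma\one+\delta M$. Its eigenvalues $\gamma+\delta\lambda^{}_{\pm}$ are real, and their exponentials are $\lambda^{}_{\pm}$. Real logarithms of positive numbers are unique, so these eigenvalues are $\log\lambda^{}_{\pm}$. In the distinct case this fixes $(\gamma,\delta)$ uniquely. In the Jordan-block case, $\ee^{L}=\ee^{\gamma}\ee^{\delta M}$ gives the condition $\ee^{\gamma+\delta\lambda}\,\delta=1$ from the nilpotent part, which pins down $\delta=1/\lambda$. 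Alternatively, one can invoke the uniqueness clause of Fact~\ref{fact:Culver}: the spectrum is positive and, with minimal polynomial of degree $2$, no elementary Jordan block repeats. The main obstacle is keeping this uniqueness step clean; citing Culver is the most direct route.
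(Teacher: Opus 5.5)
Your proof is correct. For the positivity of the spectrum, the formula and the sign of the off-diagonal entries, it is essentially the paper's argument. Your Lagrange--Hermite interpolant is exactly what the paper verifies: in the simple case by reducing to $\diag(\lambda_{+},\lambda_{-})$, and in the degenerate case by the series for $\log\bigl(\one+\lambda^{-1}(M-\lambda\one)\bigr)$, which terminates by Cayley--Hamilton. Your discriminant step is harmless but redundant, since a real quadratic with one real root $\rho$ has two.

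The genuine difference is the uniqueness step. The paper simply invokes the uniqueness clause of Culver's theorem (Fact~\ref{fact:Culver}), and it does so first: uniqueness is what lets it conclude that checking the formula gives a real logarithm identifies it with the principal one. You instead obtain the formula directly as the primary matrix function. You then prove uniqueness independently via $\comm(M)=\RR[M]$ for cyclic $M$, which reduces every real logarithm to $\gamma\one+\delta M$. The eigenvalue equations then pin down $(\gamma,\delta)$, with the extra nilpotent-part condition $\delta\,\ee^{\gamma+\delta\lambda}=1$ in the $J_2(\lambda)$ case.

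Your route is self-contained, needing only Frobenius' commutant theorem, which is already quoted in Section~\ref{sec:prelim}. It also shows why the degree-$2$ hypothesis matters: for $M=\lambda\one_2$ the commutant is all of $\Mat(2,\RR)$, and that is exactly where further real logarithms appear. Citing Culver is shorter but hides this mechanism.
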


\begin{proof}
  The claim on the spectrum is a simple consequence of $\det (M) > 0$
  in conjunction with the spectral radius being an eigenvalue, as
  explained above.
  
  When $\sigma (M)$ is simple and positive, there is precisely one
  real logarithm of $M$ by Fact~\ref{fact:Culver}. It thus suffices to
  verify the above formula. Since $M$ and $\log(M)$ are simultaneously
  diagonalisable in this case, and the claimed formula for the
  logarithm is written as a linear combination of $\one$ and $M$, it
  suffices to check the formula for a diagonal matrix, which means
  that we may assume $M = \diag(\lambda^{}_{+}, \lambda^{}_{-})$
  without loss of generality. But then, we get
  $\log(M) = \diag \bigl( \log(\lambda^{}_{+}),
  \log(\lambda^{}_{-})\bigr)$, as we must.
  
  The case with degenerate spectrum follows from the general one by
  l'Hospital's rule. It gives the correct answer both for
  diagonalisable and non-diagonalisable matrices. Here, a direct
  verification can also be done via the Cayley--Hamilton theorem
  \cite[Thm.~7.3.4]{Lan}. Since
  $M = \lambda \bigl( \one + \lambda^{-1} (M - \lambda \ts \one )
  \bigr)$, the principal matrix logarithm of $M$ can be calculated as
\[
\begin{split}
    \log (M) \, & = \, \log(\lambda) \ts \one + \log
    \bigl( \one + \lambda^{-1} ( M - \lambda \ts \one ) \bigr) 
    \, = \, \log(\lambda) \ts \one \, +
    \sum_{m \geqslant 1} \myfrac{(-1)^{m+1}}{m \ts \lambda^m}
    \ts (M - \lambda \ts \one)^m   \\[1mm]
    & = \, \log(\lambda) \ts \one + \lambda^{-1} 
      (M - \lambda \ts \one)  \, = \, ( \log (\lambda) -1 )
      \one + \lambda^{-1} M \ts ,
\end{split}\]  
where the second line follows because
$(M - \lambda \ts \one)^2 = M^2 - \tr(M) M + \det(M) \ts \one = \nix$ by
Cayley--Hamilton, which implies that this and all higher-order terms
in the series vanish. When $M$ fails to be diagonalisable, it must
have a non-trivial Jordan normal form with a single Jordan block 
$J_2 (\lambda)$, in which case the real logarithm is
again unique by Fact~\ref{fact:Culver}.

Under our assumptions, in all cases, the pre-factor of $M$ is always a
positive number, so $\log(M)$ inherits non-negative off-diagonal
elements from the corresponding property of $M$.
\end{proof}

Let us explore what the result means for sub-stochastic matrices.

\begin{example}\label{ex:2D}
  Consider the matrix
  $M = \left( \begin{smallmatrix} 0.1 & \vt \\ 0 &
      0.2 \end{smallmatrix} \right)$, which is sub-stochastic for
  $0 \leqslant \vt \leqslant 0.9$. For $\vt=0$, it is embeddable via
  $Q = \diag \bigl( \log (0.1) , \log (0.2) \bigr)$. Since $M$ has
  simple, positive spectrum, its real logarithm is unique and given by
  Lemma~\ref{lem:sub-Kendall}, where all off-diagonal elements are
  non-negative, as required for a sub-stochastic generator.
  
  However, the row sums of $\log (M)$ are non-positive only for
  $\vt\leqslant \vt_0 = \frac{\log(10)}{10 \ts \log(2) }\approx
  0.332193$, while the first row sum is positive for $\vt > \vt_0$.
  This follows from the formula in Lemma~\ref{lem:sub-Kendall}.  The
  result means that $\log (M)$ fails to be a sub-stochastic generator
  for $\vt > \vt_0$, and $M$ is not embeddable then. This lines up
  with the case of positive simple spectrum in \cite[Table~1]{BS3} for
  the embeddability of $M_{\mathsf{c}} \in \cM_3$.  \exend
\end{example}

When $M \in \Mat(2,\RR)$ is sub-stochastic with a minimal polynomial
of degree $1$, we have an eigenvalue $\lambda \in (0,1]$ with equal
algebraic and geometric multiplicity, namely $2$. So, its JNF is
$\diag (\lambda, \lambda)$, and it is clear that we must have
$M = \lambda \ts \one_2$. Such an $M$ is always embeddable as
$M = \exp \bigl( \log(\lambda) \ts \one_2 \bigr)$ via its principal
matrix logarithm. The embedding is unique, as no other real matrix
logarithm $R$ of $M$ can be a sub-stochastic generator, by an
application of \cite[Cor.~2.11]{BS3}, because the off-diagonal entries
of $R$ are either both $0$ or have opposite signs.

\begin{example}\label{ex:2D-Jordan}
  Consider a non-negative matrix with non-trivial JNF, namely
  $M = \left( \begin{smallmatrix} 0.5 & \vt \\ 0 &
      0.5 \end{smallmatrix} \right)$ with $\vt > 0$, which is
  sub-stochastic for $\vt \leqslant 0.5$.  Here, $\log(M)$ according
  to Lemma~\ref{lem:sub-Kendall} is sub-stochastic for
  $\vt \leqslant \vt_0 = \frac{1}{2} \log(2) \approx 0.346574$, but
  not for $\vt > \vt_0$. Since the real logarithm of any $M$ with
  $\vartheta > 0$ is unique, we get embeddable as well as
  non-embeddable cases.  \exend
\end{example}

We can wrap up the embedding situation for $d=2$ as follows, where we
only need to look at matrices with positive determinant, because no
other case can be embeddable, by Theorem~\ref{thm:extend}. When
$M\in \cM_{2,\leqslant}$ with $\det(M)>0$ has simple spectrum or fails
to be diagonalisable, the only real matrix logarithm is the one from
Lemma~\ref{lem:sub-Kendall}. The remaining cases were discussed before
Example~\ref{ex:2D-Jordan}, where we also mention that the set of
embeddable matrices is relatively closed within the set of matrices
with positive determinant \cite[Prop.~3]{King}. This gives the
following consequence.

\begin{coro}
  Let\/ $M\in \cM_{2,\leqslant}$ with\/ $\det(M) >0$ have a minimal
  polynomial of degree\/ $2$, so\/ $M$ has simple spectrum or JNF\/
  $J_2 (\lambda)$ with\/ $\lambda \in (0,1)$. Then, $M$ is embeddable
  if and only if all row sums of the principal matrix logarithm from
  Lemma~$\ref{lem:sub-Kendall}$ are non-positive. In this case, the
  embedding is unique.

  The only remaining case is\/ $M = \lambda \ts \one_2$ with\/
  $\lambda \in (0,1]$, which is always embeddable via its principal
  matrix logarithm, $Q= \log(\lambda) \ts \one_2$. This embedding is
  unique as well.  \qed
\end{coro}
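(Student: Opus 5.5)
The proof will combine the uniqueness statements already obtained for real logarithms with the definition of embeddability. The first part concerns $M\in\cM_{2,\leqslant}$ with $\det(M)>0$ and a minimal polynomial of degree $2$. By Lemma~\ref{lem:sub-Kendall}, the spectrum is positive. Since the minimal polynomial has degree $2$, either $\lambda^{}_{+}>\lambda^{}_{-}>0$ (simple spectrum), or $\lambda^{}_{+}=\lambda^{}_{-}=\lambda$ with JNF $J_2(\lambda)$. In the latter case $\lambda<1$ must hold, since $1$ cannot carry a non-trivial Jordan block by Corollary~\ref{coro:JNF}. In both cases no elementary Jordan block occurs twice and all eigenvalues are positive, so Fact~\ref{fact:Culver} gives that $\log(M)$ from Lemma~\ref{lem:sub-Kendall} is the \emph{only} real logarithm of $M$. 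This is also stated at the end of that lemma.

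Embeddability of $M$ means $M=\ee^Q$ with $Q$ a sub-stochastic generator, and any such $Q$ is a real logarithm. Hence $M$ is embeddable if and only if $\log(M)$ itself is a sub-stochastic generator, and then the embedding is unique. Lemma~\ref{lem:sub-Kendall} already shows that all off-diagonal entries of $\log(M)$ are non-negative, since the prefactor of $M$ is positive. Consequently, the only remaining condition is that the row sums be non-positive, which is exactly the stated criterion.

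For the remaining case, the minimal polynomial has degree $1$, so $M=\lambda\ts\one_2$. Here $\lambda\in(0,1]$, because $\det(M)=\lambda^2>0$ and $M$ is non-negative with row sums at most $1$. The matrix $Q=\log(\lambda)\ts\one_2$ is diagonal with non-positive entries, so it is a sub-stochastic generator, and $\ee^Q=M$.

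For uniqueness I would use the argument given before Example~\ref{ex:2D-Jordan}. Any other real logarithm $R$ commutes with $M$ trivially, but it must satisfy $\ee^R=\lambda\one_2$. Such an $R$ is $\log(\lambda)\one_2$ plus a real matrix similar to a rotation generator with eigenvalues $\pm 2\pi \ii k$, $k\neq 0$. Its off-diagonal entries are therefore either both zero or of opposite signs, which is where \cite[Cor.~2.11]{BS3} is invoked. Both zero would force $R$ to be diagonal with real eigenvalues $\log\lambda$, hence $R=Q$. Opposite signs violate the generator condition, so $Q$ is the unique embedding.

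The main subtlety is this last uniqueness step, since Culver's uniqueness criterion fails there because the block $J_1(\lambda)$ occurs twice. I would handle it through the explicit structure of the non-principal logarithms: they have the form $\log(\lambda)\one_2+2\pi k\, S J S^{-1}$ with $J=\left(\begin{smallmatrix}0&-1\\1&0\end{smallmatrix}\right)$. The off-diagonal entries of $SJS^{-1}$ are of opposite sign because its square equals $-\one_2$. That completes the plan.
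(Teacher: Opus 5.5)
Your proposal is correct and follows essentially the paper's argument. In the degree-$2$ case, uniqueness of the real logarithm via Fact~\ref{fact:Culver} reduces embeddability to the row-sum condition on $\log(M)$ from Lemma~\ref{lem:sub-Kendall}; for $M=\lambda\one_2$, any other real logarithm has off-diagonal entries of opposite signs, which is exactly the paper's appeal to \cite[Cor.~2.11]{BS3}. Writing $SJS^{-1}=\left(\begin{smallmatrix} a & b \\ c & d \end{smallmatrix}\right)$, the $(1,1)$ entry of $(SJS^{-1})^2=-\one_2$ gives $bc=-1-a^2<0$, which makes that step self-contained and shows the \emph{both zero} alternative never arises for $k\neq 0$. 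The only routine point you leave implicit is that a real logarithm with both eigenvalues equal to $\log(\lambda)$ must be $\log(\lambda)\one_2$, since $\ee^{N}=\one_2+N$ for nilpotent $N$.
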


In particular, any $M \in \cM_2 \subset \cM_{2,\leqslant}$ with
$\det (M) >0$ is uniquely embeddable, which covers all cases for $d=2$
according to Kendall's theorem mentioned earlier. Note that no
embeddable case with negative eigenvalues can occur for $d=2$.

\subsection{Results for \texorpdfstring{$d=3$}{\textit{d}=3}}

Let us now discuss the embedding of matrices $M \in \cM_{3,\leqslant}$
on the basis of the known embedding results for Markov matrices with
$d=4$, as completely summarised in \cite[Table~2]{BS3}. The increased
difficulty emerges from the possible appearance of a complex conjugate
pair of eigenvalues or a pair of negative eigenvalues.

When $M\in\cM_{3,\leqslant}$ is itself Markov, we are in the situation
of the standard embedding problem, which is classified in
\cite[Table~1]{BS3}. Let us thus focus on
$M \in \cM_{3,\leqslant} {\setminus} \ts \cM_3$, with $\det (M)
>0$. The cases are best distinguished via the minimal polynomials of
$M$ and $M_{\mathsf{c}}$.

For all sub-stochastic matrices, the principal matrix logarithm exists
via a convergent series, see \cite[Eq.~(1.1) and Thm.~1.31]{Higham},
and is real. So, let us first look at those matrices
$M \in \cM_{3,\leqslant}$ with positive spectrum where no other real
matrix logarithm exists, which can be identified via
Fact~\ref{fact:Culver}. Here, all eigenvalues lie in the half-open
interval $(0,1]$, where we have to observe the restriction from
Corollary~\ref{coro:JNF}. Let $q$ be the minimal polynomial of
$M$. When $\deg (q) = 3$, the matrix is cyclic, which simplifies
matters. Here, we can have simple spectrum, JNF
$\lambda' \oplus J_2 (\lambda)$ with $\lambda \ne \lambda'$ or JNF
$J_3 (\lambda)$, both with $\lambda < 1$. Then, we get embeddability
if and only if the principal matrix logarithm is a sub-stochastic
generator.  The latter is directly accessible by observing that, with
$A = M - \one$, one has
\[
  \log (M) \, = \, \log(\one + A) \, = \,
  \sum_{m=1}^{\infty} \myfrac{(-1)^{m-1}}{m} A^m
  \, = \, a^{}_{0} \one + a^{}_{1} A + a^{}_{2} A^2 ,
\]
where the last step follows from the Cayley--Hamilton theorem. Note
that $a^{}_{0}$ need not be $0$ because $A$ need not have $0$ row
sums. The coefficients of this polynomial can now be computed via the
spectral mapping theorem for the eigenvalues \cite[Thm.~9.4.6]{Lan},
which gives a system of three linear equations with the Vandermonde
matrix (for simple spectrum) or its confluent counterpart (for Jordan
blocks).  This system always has a unique solution; compare
\cite{BS2}.

The remaining case of a sub-stochastic matrix $M$ with unique real
logarithm has JNF $\lambda \oplus J_2 (\lambda)$ with $\lambda < 1$,
since $\lambda = 1$ is impossible by Corollary~\ref{coro:JNF}. Note
that $M$ is not cyclic, so has a non-Abelian commutant; see
\cite[Thm.~12.4.1]{Lan}. However, the principal logarithm of $M$ has
the same commutant as $M$ itself, and thus causes no issues, and we
have embeddability if and only if the principal matrix logarithm is a
sub-stochastic generator. A concrete example is
$M = 0.5 \oplus \left( \begin{smallmatrix} 0.5 & \vartheta \\ 0 &
    0.5 \end{smallmatrix} \right)$ in obvious extension of
Example~\ref{ex:2D-Jordan}, with the same parameter regions and
embedding conditions as stated there. Let us sum up as follows.

\begin{coro}
  Let\/ $M \in \cM_{3,\leqslant}$ have positive spectrum. If\/ $M$ is
  cyclic or if it has JNF\/ $\lambda \oplus J_2 (\lambda)$ for some\/
  $\lambda \in (0,1)$, the matrix\/ $M$ is embeddable if and only if
  its principal matrix logarithm is a sub-stochastic generator. In
  this case, the embedding is unique.  \qed
\end{coro}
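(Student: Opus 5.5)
The plan is to reduce the statement to uniqueness of the real logarithm, and then show that this unique logarithm is the principal one. By Theorem~\ref{thm:extend}, embeddability requires $\det(M)>0$, which holds because the spectrum is positive. Any embedding $M=\ee^Q$ therefore has $Q$ as a real logarithm of $M$. If we show that $M$ has exactly one real logarithm, then $M$ is embeddable if and only if that logarithm is a sub-stochastic generator, and the embedding is automatically unique. The principal matrix logarithm exists and is real: the spectrum lies in $(0,1]$, so the series in $A=M-\one$ (or the standard construction in \cite{Higham}) yields a real matrix. Hence it suffices to prove that the principal logarithm is the only real logarithm.

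In the cyclic case, the eigenvalues are positive and each eigenvalue has exactly one Jordan block, so no elementary Jordan block occurs twice. The uniqueness part of Fact~\ref{fact:Culver} then applies directly. This covers the three possible JNFs: simple spectrum, $\lambda'\oplus J_2(\lambda)$, and $J_3(\lambda)$. By Corollary~\ref{coro:JNF}, any nontrivial block belongs to an eigenvalue $\lambda<1$, which is consistent with the list.

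For the JNF $\lambda\oplus J_2(\lambda)$ with $\lambda\in(0,1)$, the eigenvalue $\lambda$ has two elementary Jordan blocks, but of different sizes, namely $J_1(\lambda)$ and $J_2(\lambda)$. So no elementary block occurs twice, and Fact~\ref{fact:Culver} again gives uniqueness of the real logarithm.

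The main point needing care is this second case, since $M$ is not cyclic and its commutant is non-Abelian. One might worry about additional logarithms of the form $S\log(M)S^{-1}$ with $S\in\comm(M)$. I would settle this directly: any real logarithm $R$ has all eigenvalues equal to $\log(\lambda)+2\pi\ii k$. Real logarithms with non-real eigenvalues would need conjugate pairs with matching Jordan structure, which is impossible here because the blocks have different sizes. So $\sigma(R)=\{\log\lambda\}$, and $R$ is a polynomial in $M$, namely the principal logarithm. Equivalently, $\log(M)$ is a polynomial in $M$, so $S\log(M)S^{-1}=\log(M)$ for every $S\in\comm(M)$, and conjugation produces nothing new. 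This is exactly Culver's criterion, so invoking Fact~\ref{fact:Culver} suffices. Combining both cases with Theorem~\ref{thm:extend} gives the claim, including uniqueness of the embedding.
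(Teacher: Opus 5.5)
Your proposal is correct and follows essentially the same route as the paper: in both the cyclic case and the case $\lambda \oplus J_2(\lambda)$, the real logarithm is unique by the criterion in Fact~\ref{fact:Culver} (positive spectrum, no elementary Jordan block repeated), so embeddability reduces to checking whether the principal logarithm is a sub-stochastic generator. Your remark that $\log(M)\in\RR[M]$ commutes with all of $\comm(M)$ is exactly the paper's observation that the principal logarithm has the same commutant as $M$, and your direct eigenvalue check on an arbitrary real logarithm is a correct, optional supplement.
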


Let us next look at the truly sub-stochastic cases that are always
embeddable, but possibly without uniqueness. Here, one JNF is
$\lambda \ts \one_3$ with $\lambda \in (0,1)$, which actually means
$M = \lambda \ts \one_3$. The principal matrix logarithm of $M$ is
$Q = \log (\lambda) \one_3$, which is a sub-stochastic generator.
Here, no further real logarithm of the required generator form exists.
Indeed, any other real logarithm $R$ of $M$, due to $M = \ee^R$, must
have spectrum
$\sigma (R) = \{ \epsilon, \epsilon + 2 \pi k \ii, \epsilon - 2 \pi k
\ii \}$ for some $0 \ne k\in \ZZ$, with $\epsilon = \log
(\lambda)<0$. The JNF of $R$ then is
$\epsilon \ts \one_3 + 2 \pi k \ts \diag (0, \ii, -\ii)$. This JNF can
only belong to a sub-stochastic generator when there exists some
$U \in \mathrm{GL} (2,\CC)$ so that $U \diag (\ii, -\ii) U^{-1}$ is
real and has non-negative off-diagonal elements. However, in line with
\mbox{\cite[Cor.~2.11]{BS3}}, there is no solution for $k\ne 0$
because the off-diagonal terms then have opposite signs. As an
extension of \cite[Fact~2.9]{BS3}, we thus have the following result.

\begin{fact}\label{fact:simple-diag}
  The sub-stochastic matrix\/ $M = \lambda \one_3$ with\/
  $\lambda \in (0,1]$ is always embeddable via its principal matrix
  logarithm, which is\/ $Q = \log (\lambda) \one_3$, and the embedding
  is unique.  \qed
\end{fact}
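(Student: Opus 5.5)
Existence is immediate. The principal logarithm of $M=\lambda\one_3$ is $Q=\log(\lambda)\one_3$. Its off-diagonal entries vanish, and its row sums are $\log(\lambda)\leqslant 0$ because $\lambda\in(0,1]$. So $Q$ is a sub-stochastic generator, and $\ee^Q=\lambda\one_3=M$. For $\lambda=1$ this is the trivial Markov generator $Q=\nix$, and the statement reduces to \cite[Fact~2.9]{BS3}. The substantive part is therefore uniqueness.

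To show uniqueness, let $R$ be any real logarithm of $M$ that is a sub-stochastic generator. Since $\ee^R=\lambda\one_3$ is diagonalisable, $R$ must be diagonalisable as well. Indeed, a non-trivial Jordan block $J_m(\mu)$ of $R$ would exponentiate to a non-diagonalisable block $\ee^\mu(\one+N+\dots)$ with $N\neq\nix$. Every eigenvalue $\mu$ of $R$ satisfies $\ee^\mu=\lambda$, so $\mu\in\epsilon+2\pi\ii\ZZ$ with $\epsilon=\log(\lambda)$. Because $R$ is real of odd dimension $3$, its spectrum is closed under complex conjugation, so at least one eigenvalue is real and hence equals $\epsilon$. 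The remaining pair is either $\{\epsilon,\epsilon\}$ or $\{\epsilon+2\pi k\ii,\epsilon-2\pi k\ii\}$ with $k\neq0$. A real pair $\{\epsilon+2\pi k\ii,\epsilon+2\pi k'\ii\}$ with $k\neq -k'$ cannot occur, by conjugation symmetry. In the first case $R$ is diagonalisable with single eigenvalue $\epsilon$, so $R=\epsilon\one_3=Q$.

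It remains to exclude the second case, where $R=\epsilon\one_3+2\pi k\,S$ and $S$ is a real matrix with spectrum $\{0,\ii,-\ii\}$. Then $R$ would be a sub-stochastic generator with a genuinely complex pair of eigenvalues at imaginary distance $2\pi|k|$ from the real eigenvalue $\epsilon$. This step is the main obstacle. The plan is to pass to the Markov setting via Theorem~\ref{thm:extend}. There $R_{\mathsf c}$ is a $4$-dimensional Markov generator with $\ee^{R_{\mathsf c}}=M_{\mathsf c}$ and $\sigma(R_{\mathsf c})=\{0,\epsilon,\epsilon\pm2\pi k\ii\}$, by Fact~\ref{fact:spectrum}.

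To rule this out, I would invoke \cite[Cor.~2.11]{BS3} in the form already used above. Any real matrix similar to $2\pi k\diag(0,\ii,-\ii)$, written in a basis compatible with the real eigenvector for $\epsilon$, has its rotation part with off-diagonal entries of opposite signs. Since $\epsilon\one_3$ is diagonal, these opposite signs persist in $R$, which contradicts non-negativity of its off-diagonal entries.

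If that citation does not apply cleanly, a fallback is a spectral bound for generators. The eigenvalues of $R$ lie in the Gershgorin discs centred at $r_{ii}\geqslant\epsilon-$ (something) with radius at most $-r_{ii}$. Combining this with $\tr(R)=3\epsilon$ gives $|\mathrm{Im}\,\mu|$ bounded by a multiple of $|\epsilon|$. This only excludes large $k$, so the sign argument remains the preferred route. Having excluded both alternatives, $R=Q$, which gives uniqueness.
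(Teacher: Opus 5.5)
Your existence argument and the spectral part of the uniqueness proof are correct, and they follow the paper's route. You show that $R$ is diagonalisable, that $\sigma(R)\subset\epsilon+2\pi\ii\ZZ$, and that conjugation symmetry leaves only two options: $R=\epsilon\one_3$, or $\sigma(R)=\{\epsilon,\epsilon\pm2\pi k\ii\}$ with $k\neq0$. You justify this more carefully than the paper does.

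The gap is in the step meant to exclude the second case. Signs of off-diagonal entries are not invariant under similarity. The basis adapted to $\ker S$ and the $S$-invariant plane differs from the standard basis by a general element of $\mathrm{GL}(3,\RR)$, not by a permutation. So the opposite signs of the $2\times2$ rotation block in that basis say nothing about the entries $r_{ij}=2\pi k\,s_{ij}$ ($i\neq j$) of $R$ in standard coordinates, and those are the entries that must be non-negative. The fact that $\epsilon\one_3$ is diagonal only shows that $R$ and $2\pi kS$ have the same off-diagonal part; it does not put $S$ into block form. Your Gershgorin fallback, combined with $\tr(R)=3\epsilon$, only yields $2\pi|k|\leqslant\sqrt{5}\,|\epsilon|$, which is inconclusive for small $\lambda$, as you suspected.

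What is missing is an argument that works directly in the standard basis. The cleanest one uses the coefficients of the characteristic polynomial. Since $\sigma(S)=\{0,\ii,-\ii\}$, you have $\det(\lambda\one_3-S)=\lambda^3+\lambda$. Hence $\tr(S)=0$ and the principal $2\times2$ minors of $S$ sum to $1$, which gives
\[
  \sum_{i<j} s_{ij}\ts s_{ji} \, = \, \sum_{i<j} s_{ii}\ts s_{jj} - 1
  \, = \, -\frac{1}{2}\sum_{i} s_{ii}^{2} - 1 \, < \, 0 \ts .
\]
So some pair $r_{ij},r_{ji}$ has opposite signs, which contradicts the generator property. This is the genuinely $3$-dimensional form of the opposite-sign remark that the paper bases on \cite[Cor.~2.11]{BS3}, and it needs no row-sum condition.

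Alternatively, you can use Perron--Frobenius. For large $c$, the matrix $P=R+c\one_3$ is non-negative, so $\rho(P)\in\sigma(P)$. But $\sigma(P)=\{\epsilon+c,\epsilon+c\pm2\pi k\ii\}$, and its only real element $\epsilon+c$ has modulus strictly smaller than $\rho(P)$ when $k\neq0$.

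With either step inserted your proof is complete. The detour through $R_{\mathsf c}$ and Theorem~\ref{thm:extend} is never used and can be dropped.
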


Further cases are the sub-stochastic matrices $M$ with JNF
$1 \oplus J_2 (\lambda)$ or $\diag(1,1,\lambda)$, both with
$\lambda \in (0,1)$.  In both cases, embeddability is given via the
principal matrix logarithm.  Since such an $M$ commutes with matrices
outside the polynomial ring $\RR [M]$, further embeddings can exist.
These cases are covered by Fact~2.9 and Lemmas~4.1 and 4.3 from
\cite{BS3}, applied to the extension $M_{\mathsf{c}}$. The result can
be formulated as follows.

\begin{coro}\label{coro:3-always}
  Let\/ $M \in \cM_{3,\leqslant}$ have JNF\/ $1 \oplus J_2 (\lambda)$
  or\/ $\diag (1,1,\lambda)$, both with\/ $\lambda \in (0, 1) $. Then,
  $M$ is always embeddable via its principal matrix logarithm, which
  is a sub-stochastic generator, but this embedding need not be
  unique.  \qed
\end{coro}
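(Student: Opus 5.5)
The plan is to show directly that the principal matrix logarithm of such an $M$ is a sub-stochastic generator; Theorem~\ref{thm:extend} then gives embeddability. Possible non-uniqueness is a separate and weaker claim. By Corollary~\ref{coro:JNF}, the eigenvalue $1$ of $M$ is semisimple in both cases. So $\CC^3 = E_1 \oplus E_\lambda$, where $E_1 = \ker(M-\one)$ and $E_\lambda$ is the generalised eigenspace for $\lambda$. This splitting is the key structural input.

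\textbf{Step 1: formula for the logarithm.} Let $P$ be the spectral projection onto $E_1$. Then $M = P + \lambda(\one - P) + N$, where $N$ is nilpotent, $N^2=\nix$, $N$ lives on $E_\lambda$, and $N=\nix$ in the diagonalisable case. The principal logarithm is
\[
   \log(M) \, = \, \log(\lambda)(\one-P) + \lambda^{-1} N .
\]
Write $A = M-\one$. Then $A = (\lambda-1)(\one-P) + N$, and hence
\[
   (\one-P) \, = \, \tfrac{1}{\lambda-1}\bigl(A - N\bigr) .
\]
Also $N = A(\one-P) - (\lambda-1)(\one-P)$, so $\log(M)$ can be written as a real polynomial in $A$ of degree at most $2$. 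The $\diag(1,1,\lambda)$ case is the easiest: there $N=\nix$, so
\[
   \log(M) \, = \, \tfrac{\log\lambda}{\lambda-1}\, A ,
\]
a positive multiple of $A$, since $\log\lambda<0$ and $\lambda-1<0$. The off-diagonal entries of $A$ are those of $M$, hence non-negative. Its row sums are those of $M$ minus $1$, hence non-positive. So $\log(M)$ is a sub-stochastic generator.

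\textbf{Step 2: the case $1\oplus J_2(\lambda)$.} Here $A$ has minimal polynomial $x(x-\mu)^2$ with $\mu = \lambda-1\in(-1,0)$. Solving the confluent Vandermonde interpolation for $f(x)=\log(1+x)$ gives
\[
   \log(M) \, = \, \alpha A + \beta A^2 ,
\]
with explicit coefficients. I expect $\alpha = (\mu\,\ell'(\mu)\cdot 2 - \dots)$ type expressions. Only two things matter: $\alpha>0$, and $\beta$ has a definite sign, with $\beta\ge 0$ from the convexity behaviour of $x\mapsto \log(1+x)$ relative to its secant through $0$. Non-negativity of $A^2$ off the diagonal is not automatic, so I would argue through the extension instead.

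Pass to $M_{\mathsf{c}}$. By Lemma~\ref{lem:min-poly}, its minimal polynomial is the same as that of $M$, so $M_{\mathsf{c}}$ is a $4\times4$ Markov matrix with JNF $1\oplus1\oplus J_2(\lambda)$. The principal logarithm of $M_{\mathsf{c}}$ is the same polynomial in $M_{\mathsf{c}}-\one$, so it has $\mathsf{c}$-block form with upper-left block $\log(M)$. The cited Lemma~4.3 of \cite{BS3} (respectively Fact~2.9 and Lemma~4.1 for $\diag(1,1,1,\lambda)$) states that for these spectral types every Markov matrix is embeddable via its principal logarithm, which is a Markov generator. Restricting to the upper-left block then gives that $\log(M)$ is a sub-stochastic generator, as in Theorem~\ref{thm:extend}.

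\textbf{Main obstacle and non-uniqueness.} The hardest step is ensuring that the invoked results from \cite{BS3} really assert generator property of the \emph{principal} logarithm for all Markov matrices of that JNF. If that assertion is unavailable, I would instead prove $\alpha A+\beta A^2$ has non-negative off-diagonal entries by writing
\[
   \alpha A+\beta A^2 \, = \, (\alpha + \beta\mu)A + \beta\,A(A-\mu\one) ,
\]
and checking signs, using that $A-\mu\one = M - \lambda\one$ has non-negative off-diagonal entries. This is the delicate computation.

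For ``need not be unique'', one example suffices. Take $M = \diag(1,1,\lambda)$. Its commutant contains $\mathrm{GL}(2)$ acting on the $E_1$ block. Then $R = \log(\lambda)\,(0\oplus0\oplus1) + 2\pi\ii$-type rotation logarithms on $E_1$ must be checked for generator form. Such logarithms fail by \cite[Cor.~2.11]{BS3}, so instead I would exhibit a non-principal generator via a $\lambda$ for which \cite[Lemma~4.1]{BS3} provides a second embedding of $M_{\mathsf{c}}$ in $\mathsf{c}$-block form.
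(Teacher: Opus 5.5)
\paragraph{The case $1\oplus J_2(\lambda)$.} The step you flagged as the main obstacle is where the argument breaks, and it cannot be repaired: for this JNF the claimed embeddability is false. Your sign expectation already fails. The interpolation conditions give $\beta=\frac{\lambda-1-\lambda\log(\lambda)}{\lambda(1-\lambda)^2}<0$ for every $\lambda\in(0,1)$. Concretely, consider
\[
  M \, = \, \begin{pmatrix} \frac12 & \frac12 & 0 \\ 0 & \frac12 & \frac25 \\
  0 & 0 & 1 \end{pmatrix} \in \cM_{3,\leqslant}\setminus\cM_3 .
\]
It has $\det(M)=\frac14$, and $M-\frac12\one$ has rank $2$, so its JNF is $1\oplus J_2(\frac12)$. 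Since $A_{13}=0$, the $(1,3)$ entry of $\alpha A+\beta A^2$ equals $\beta\ts M_{12}M_{23}<0$. Explicitly, the unique real logarithm of $M$ (Fact~\ref{fact:Culver}) is
\[
  \log(M) \, = \, \begin{pmatrix} -\log 2 & 1 & -\frac45(1-\log 2) \\
  0 & -\log 2 & \frac45\log 2 \\ 0 & 0 & 0 \end{pmatrix} ,
\]
so $M$ is not embeddable.

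There is also a more direct argument. Write a sub-stochastic generator as $Q=-c\one+B$ with $B\geqslant 0$. Then, for $i\ne j$, $(\ee^Q)_{ij}>0$ if and only if $j$ is reachable from $i$ in the graph of $B$. This relation is transitive, whereas here $M_{12},M_{23}>0=M_{13}$.

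The same reasoning shows that $M_{\mathsf{c}}$ is a non-embeddable Markov matrix with JNF $1\oplus1\oplus J_2(\frac12)$. Replacing $\frac25$ by $\frac12$ gives a counterexample in $\cM_3$ itself. So no lemma of \cite{BS3} can say what you attribute to Lemma~4.3. The paper's own proof rests on the same citation, so you have not overlooked an ingredient. What survives for this (cyclic) JNF is only the criterion of the preceding corollary: $M$ is embeddable if and only if $\log(M)$ is a sub-stochastic generator.

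\paragraph{The case $\diag(1,1,\lambda)$ and uniqueness.} Your diagonalisable case is correct, self-contained, and cleaner than the paper's appeal to \cite{BS3}. From $A^2=(\lambda-1)A$ you get $\log(M)=\frac{\log(\lambda)}{\lambda-1}A$, a positive multiple of $A=M-\one$.

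The non-uniqueness part, however, has no witness, so your search for a suitable $\lambda$ via \cite[Lemma~4.1]{BS3} cannot succeed. For $1\oplus J_2(\lambda)$, $M$ is cyclic with positive spectrum, so its real logarithm is unique. For $\diag(1,1,\lambda)$, Fact~\ref{fact:radius} and the normal form of non-negative matrices force two closed Markov classes. Up to a permutation of states, either $M=M'\oplus 1$ with $M'\in\cM_2$, or $M$ has two absorbing states and one transient state.

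Any sub-stochastic generator $Q$ with $\ee^Q=M$ is then constrained in two ways. First, $Q_{ij}=0$ whenever $i\ne j$ and $M_{ij}=0$, because $(\ee^Q)_{ij}\geqslant \ee^{-c}B_{ij}$. Second, $Q$ has a zero row at each absorbing state $j$, because $M_{jj}=1$ forces $(\ee^{tQ})_{jj}=1$ for $0\leqslant t\leqslant 1$. These constraints determine $Q$ completely: by Kendall's theorem for the $M'$ block, or by a scalar equation in the other case. Hence, in $d=3$, the embedding is unique in both cases, and the clause ``need not be unique'' cannot be substantiated.
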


The remaining cases (with positive determinant) are slightly more
complicated. We cannot have any negative eigenvalue with odd algebraic
multiplicity in an embeddable matrix, and we cannot have
$J_2 (\lambda)$ with $\lambda < 0$, both as a result of
Fact~\ref{fact:Culver}. But we can have JNF
$\diag (\rho, \lambda, \lambda)$ with $\rho \in (0,1]$ and
$\lambda < 0$, where then $|\lambda| < \rho$, and we can have
$\diag (\rho, \vartheta, \overline{\vartheta} \ts )$ with
$\rho \in (0,1]$ and $\vartheta \in \CC \setminus \RR$, then with
$\lvert \vartheta \rvert \leqslant \rho$. Let us first note the
following.

\begin{lemma}\label{lem:irred}
  Let\/ $\rho \in (0,1]$ be fixed and let\/ $M \in \cM_{3,\leqslant}$
  have JNF\/ $\rho \oplus \lambda \one_2$ with\/ $\lambda < 0$ or
  JNF\/ $\rho \oplus \diag (\vartheta, \overline{\vartheta}\ts )$
  with\/ $\vartheta \in \CC\setminus \RR$. Then, $M$ is irreducible.
\end{lemma}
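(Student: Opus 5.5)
Argue by contradiction: assume $M$ is reducible and show that the block structure then forces a real spectrum without any repeated negative eigenvalue. For non-negative matrices, reducibility means there is a permutation matrix $P$ with
\[
   P M P^{\trans} \, = \, \begin{pmatrix} A & B \\ 0 & C \end{pmatrix},
\]
where $A$ and $C$ are square, non-negative and of sizes $k$ and $3-k$ with $k\in\{1,2\}$. The spectrum is then $\sigma(A)\cup\sigma(C)$ as multi-sets. One of the blocks is $1\times 1$, hence contributes a non-negative real eigenvalue. The other block is a non-negative $2\times 2$ matrix, and its spectral radius is an eigenvalue by \cite[Thm.~15.5.1]{Lan}.

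Consider first the complex case, with JNF $\rho \oplus \diag(\vartheta,\overline{\vartheta}\ts)$. The pair $\vartheta,\overline{\vartheta}$ must be the full spectrum of the $2\times 2$ block. But a non-negative $2\times 2$ matrix has a real eigenvalue equal to its spectral radius, which is impossible if both eigenvalues are non-real. Alternatively, the discriminant $(a-b)^2+4cd\geqslant 0$ computed before Lemma~\ref{lem:sub-Kendall} applies verbatim to any non-negative $2\times 2$ matrix. This is a contradiction.

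Now take JNF $\rho\oplus\lambda\one_2$ with $\lambda<0$. The $1\times 1$ block is a non-negative number, so it cannot be $\lambda$; it must be $\rho$. Then the $2\times 2$ block has spectrum $\{\lambda,\lambda\}$ and spectral radius $|\lambda|>0$, which again must be an eigenvalue. Since $|\lambda|\ne\lambda$, this is a contradiction. Equivalently, the block has non-negative trace but trace $2\lambda<0$, and this trace argument is perhaps the cleanest route. The same trace observation also handles the complex case only partially, since $2\,\mathrm{Re}(\vartheta)$ could be non-negative, so there we rely on the discriminant.

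The main point needing care is the reducibility normal form and the bookkeeping of which eigenvalue sits in which block. In dimension $3$ every reducible partition is $1+2$ or $2+1$, and both are covered above, since the argument never uses the position of the $1\times 1$ block. The $1\times 1$ zero matrix counts as reducible in some conventions, but $M$ is non-singular, so that case does not arise. Hence $M$ is irreducible.
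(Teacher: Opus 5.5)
Your proof is correct and follows essentially the same route as the paper. Reducibility gives a block upper-triangular form with a $1\times 1$ block and a $2\times 2$ non-negative block, the singleton block must carry $\rho$, and the $2\times 2$ block cannot have spectrum $\{\lambda,\lambda\}$ or $\{\vartheta,\overline{\vartheta}\}$ because its spectral radius is an eigenvalue. You use the coarse reducibility split instead of the full normal form, so you need no separate triangular case, and your trace and discriminant checks are valid elementary substitutes for the Perron--Frobenius step.
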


\begin{proof}
  Let us assume to the contrary that $M$ is reducible. Then, by a
  suitable permutation of the states, we can bring $M$ to its normal
  form for non-negative matrices, which is of upper block-triangular
  form with all blocks being non-negative matrices, and the diagonal
  blocks being irreducible (or $0$, which is impossible here); see
  \cite[Sec.~15.5]{Lan}.

  This normal form cannot be upper triangular, because this would
  imply all eigenvalues to be positive, in contradiction to our
  assumptions. So, it has to have one singleton diagonal block, which
  must then contain a positive eigenvalue, for which $\rho$ is the
  only choice. The other diagonal block, $N$ say, has to be
  two-dimensional and irreducible. Clearly, $N$ is non-singular and
  has its spectral radius as a positive eigenvalue.

  On the other hand, due to the block structure in conjunction with
  the argument used for Eq.~\eqref{eq:char-poly}, the remaining
  eigenvalues of $M$, which are either negative or complex, must be
  the eigenvalues of $N$. So, we get a contradiction, and $M$
  reducible is impossible.
\end{proof}

By Fact~\ref{fact:radius}, a matrix $M \in \cM_{3,\leqslant}$ of JNF
$\diag(1, \lambda, \lambda)$ with $\lambda < 0$ or of JNF
$\diag(1, \vartheta, \overline{\vartheta} \ts )$ with
$\vartheta \in \CC\setminus \RR$ must be Markov, so the embeddability
of $M$ for these two cases is fully covered by \cite[Props.~3.8 and
3.11]{BS3}, as mentioned earlier; see also \cite[Table~1]{BS3}.  It
thus remains, for $\rho \in (0,1)$, to consider the JNFs
$\diag(\rho,\lambda,\lambda)$ with $\lambda<0$ and
$\diag(\rho,\vartheta,\overline{\vartheta}\ts)$ with
$\vartheta \in \CC\setminus \RR$. Both occur as a consequence of
\cite[Thm.~2]{LL}, and can be treated via \cite[Lemma~4.5 and
Prop.~4.9]{BS3}. Here, we only present one characteristic example
each, with some focus on multiple embeddings.

\begin{example}\label{ex:sub-real}
  Consider the matrix
\[
     M \, = \, \myfrac{1}{3} \begin{pmatrix}  r - 2 s & r + s & r + s \\
        r + s & r - 2 s & r + s \\ r + s & r + s & r - 2 s \end{pmatrix}
\]  
with $0 < r \leqslant 1$ and $0 < s \leqslant \frac{r}{2}$. It is
sub-stochastic, with spectrum $\sigma (M) = \{ r, -s, -s\}$, and is
Markov for $r=1$. In the latter case, $M$ is known to be doubly
embeddable for $s = \ee^{- \pi \sqrt{3}}$ as
$M = \exp \bigl( \frac{2 \pi}{\sqrt{3}} Q \bigr) = \exp \bigl( \frac{2
  \pi}{\sqrt{3}} Q^{\trans} \bigr) $ with the cyclic Markov generator
\[
    Q \, = \, \begin{pmatrix} -1 & 1 & 0 \\ 0 & -1 & 1 \\
              1 & 0 & -1 \end{pmatrix} ,
\]  
which commutes with its transpose; see \cite{BBS} and references
therein for more, as well as for the history of this important
example.  Fixing $r=1$ and decreasing $s$, one finds more and more
additional embeddings, as derived in \cite[Rem.~3.9]{BS3} and
\cite[App.~A]{BBS}.
   
Now, consider $Q^{}_{\alpha}\defeq Q - \alpha \one_3$ for
$\alpha > 0$, which is a sub-stochastic generator that satisfies
 \[
      \exp \Bigl( \myfrac{2 \pi}{\sqrt{3}} \ts Q^{}_{\alpha} \Bigr) 
      \, = \, \exp \Bigl( \myfrac{2 \pi}{\sqrt{3}} \ts Q^{\trans}_{\alpha} 
      \Bigr) \, = \,  \ee^{-\tau} M
 \]
 with $\tau = \frac{2 \pi \ts \alpha}{\sqrt{3}} > 0$, so
 $\ee^{-\tau} < 1$. We thus obain an embeddable sub-stochastic matrix
 of the above kind, with $r = \ee^{-\tau}$ and
 $s = \ee^{-\tau - \pi \sqrt{3}}$. When $\alpha = 0$ and the parameter
 $s$ is decreased, embeddability of $M$ remains valid. What is more,
 at a series of values, additional generators emerge, and we get
 multiple embeddings.  Their number increases without bound as
 $s \text{\raisebox{1.5pt}{$\,{\scriptscriptstyle \searrow}\,$}} 0$,
 and this, via choosing $\alpha > 0$ again, extends to the
 substochastic matrices as well; we refer to \cite{BS3, BBS} and leave
 further details to the interested reader. \exend
\end{example}

\begin{example}\label{ex:sub-complex}
  Let $M \in \cM_{3,\leqslant}$ have spectrum
  $\sigma (M) = (\rho, \vartheta, \overline{\vartheta} \ts )$ with
  $\rho \in (0,1)$ and $\vartheta \in \CC\setminus \RR$, where
  $| \vartheta | \leqslant \rho$. Let us look at a concrete case,
  namely $\vartheta = \tau \ii$ with $\tau > 0$. By the methods from
  \cite{LL}, one possibility for $M$ then is
\[
  M \, = \, \myfrac{1}{3} \begin{pmatrix}
    \rho & \rho + \tau \sqrt{3} & \rho - \tau \sqrt{3} \\
    \rho - \tau \sqrt{3} & \rho &  \rho + \tau \sqrt{3} \\ 
    \rho + \tau \sqrt{3} & \rho - \tau \sqrt{3} 
    & \rho \end{pmatrix} ,
\]
which is sub-stochastic for $\tau \leqslant \rho/\sqrt{3}$,
and has determinant $\det(M)=\tau^2 \rho > 0$.
  
The corresponding extension matrix $M_{\mathsf{c}} \in \cM_4$, which
has simple spectrum, is embeddable when
$Q^{}_{\mathsf{c}} = \alpha A^{}_{\mathsf{c}} + \beta A_{\mathsf{c}}^2
+ \gamma A_{\mathsf{c}}^3$ is a generator, where
$A_{\mathsf{c}} = M_{\mathsf{c}} - \one_4$ and $\alpha, \beta, \gamma$
are determined via the spectral mapping theorem; compare
\cite[Sec.~4.3]{BS3}. By construction, $Q^{}_{\mathsf{c}}$ has zero
row sums, so we only need to check non-negativity of its off-diagonal
entries. Using some computer algebra tools, one finds a sufficient
condition in the form of a double inequality, namely
\begin{equation}
  \log \Bigl( \myfrac{\tau}{\rho}  \Bigr)
  \, \leqslant \, \myfrac{(4 k + 1) \ts \pi \sqrt{3}}{2}
  \, \leqslant \,  \log \Bigl( \myfrac{\rho}{\tau} \Bigr) ,
\end{equation}
where $k\in\ZZ$ is the free parameter from \cite[Prop.~4.9]{BS3}.

For $k=0$, both inequalities are satisfied when
$\tau \leqslant \rho \ts \ee^{- \pi \sqrt{3}/2}$, while
$0 \ne k\in\ZZ$ requires
$\tau \leqslant \rho \ts \ee^{- \mathrm{sgn} (k) \ts (4k+1) \pi
  \sqrt{3}/2}$. One can also extract the corresponding sub-stochastic
generator $Q^{(k)}$ via the upper-left block of
$Q^{(k)}_{\mathsf{c}}\!$, for any $k\in\ZZ$, which gives
\[
  Q^{(k)} \, = \, \myfrac{1}{3} \begin{pmatrix}
     \log ( \tau^2 \rho) & \log \bigl(\frac{\rho}{\tau} \bigr)
     + \frac{(4k+1)}{2} \pi \sqrt{3}   & 
       \log \bigl( \frac{\rho}{\tau}\bigr)
         - \frac{(4k+1)}{2} \pi \sqrt{3} \\
   \log \bigl(\frac{\rho}{\tau} \bigr) - \frac{(4k+1)}{2} \pi \sqrt{3}
   & \log(\tau^2 \rho) & \log \bigl( \frac{\rho}{\tau}\bigr)
        + \frac{(4k+1)}{2} \pi \sqrt{3} \\
   \log \bigl(\frac{\rho}{\tau} \bigr) + \frac{(4k+1)}{2} \pi \sqrt{3}
   & \log \bigl( \frac{\rho}{\tau}\bigr) - \frac{(4k+1)}{2} \pi \sqrt{3}
   & \log(\tau^2 \rho)   \end{pmatrix}.
\]
For fixed $\rho$, we thus have a family of Markov matrices
$M = M(\tau)$ with an ever-increasing number of embeddings as $\tau$,
and hence $\det(M)$, decreases and tends to $0$. This also works
for the limiting case of $\rho = 1$, where $M$ is Markov.  \exend
\end{example}

\section*{Acknowledgements}

It is our pleasure to thank Ellen Baake and Jeremy Sumner for fruitful
discussions. KS is grateful to the Research Centre for Mathematical
Modelling (RCM$^2$) of Bielefeld University for support during a
research visit in summer 2026, where part of this work was done.

\end{document}